\providecommand{\U}[1]{\protect\rule{.1in}{.1in}}
\providecommand{\U}[1]{\protect\rule{.1in}{.1in}}
\newtheorem{assumption}[theorem]{Assumption}
\newtheorem{remark}[theorem]{Remark}
\newtheorem{example}[theorem]{Example}
\newcommand{\e}{\mathrm{e}}
\renewcommand{\d}{\mathrm{d}}
\newcommand{\eps}{\varepsilon}
\newcommand{\E}{\mathbf{E}}
\newcommand{\R}{\mathbb{R}}
\newcommand{\cF}{\mathcal{F}}
\newcommand{\cP}{\mathcal{P}}
\newcommand{\cD}{\mathcal{D}}
\newcommand{\lp}{\prescript{{\bf p}}{}}
\newcommand{\state}{{y}}
\begin{document}

\title{A Maximum Principle for Optimal Control of Stochastic Evolution Equations}

\author{Kai~Du\thanks{Department of Mathematics, ETH Zurich, 8092 Zurich, Switzerland
({\tt kai.du@math.ethz.ch}). Financial support by the National Centre of Competence
in Research ``Financial Valuation and Risk Management'' (NCCR FINRISK),
Project D1 (Mathematical Methods in Financial Risk Management) is gratefully
acknowledged. The NCCR FINRISK is a research instrument of the Swiss National
Science Foundation.}
\and Qingxin~Meng\thanks{Department of Mathematical Sciences, Huzhou University,
Zhejiang 313000, China ({\tt mqx@hutc.zj.cn}). Financial support by the National
Natural Science Foundation of China (Grant No. 11101140, No.11301177), China
Postdoctoral Science Foundation (2011M500721, 2012T50391), and the Natural
Science Foundation of Zhejiang Province (Grant No. Y6110775, Y6110789) is
gratefully acknowledged.}}
\maketitle

\begin{abstract}
A general maximum principle is proved for optimal controls of abstract semilinear
stochastic evolution equations. The control variable, as well as linear
unbounded operators, acts in both drift and diffusion terms, and the control
set need not be convex.
\end{abstract}

\begin{keywords} 
Maximum principle, stochastic evolution equation, $L^{p}$ estimate, stochastic bilinear functional, operator-valued stochastic process
\end{keywords}

\begin{AMS}
49K27, 93E20, 60H25
\end{AMS}

\pagestyle{myheadings}
\thispagestyle{plain}
\markboth{KAI DU AND QINGXIN MENG}{STOCHASTIC MAXIMUM PRINCIPLE}

\section{Introduction}

In this paper, we study an abstract infinite-dimensional stochastic control problem
whose purpose is to minimize a cost functional
\[
J(u)=\mathbf{E}\int_{0}^{1}l(t,x(t),u(t))\,\d t+\mathbf{E}h(x(1))
\]
subject to the semilinear stochastic evolution equation (SEE)
\begin{equation}\label{eq:state}
\bigg\{\begin{aligned}
\mathrm{d}x(t)  &  =[A(t)x(t)+f(t,x(t),u(t))]\mathrm{d}
t+[B(t)x(t)+g(t,x(t),u(t))]\,\mathrm{d}W_{t},\\
x(0)  &  =x_{0},
\end{aligned}
\end{equation}
where the state process $x(\cdot)$ and control $u(\cdot)$ take values in
infinite dimensional spaces, $A(t)$ and $B(t)$ are both random unbounded
operators, $f,g,l$ and $h$ are given random functions, and $W$ is a standard
Wiener process.

A classical approach for optimal control problems is to derive necessary
conditions satisfied by an optimum, such as Pongtryagin's maximum principle
(cf. \cite{pontryagin1962mathematical}). Since 1970s, the maximum principle was
extensively studied for stochastic control systems: in the finite dimensional
case, it has been solved by Peng \cite{peng1990general} in a general setting
where the control was allowed to take values in a nonconvex set and enter into
the diffusion, while in the infinite-dimensional case, the existing literature,
e.g. \cite{bensoussan1983stochastic,
hu1990maximum,tang1993maximum,tudor1989optimal,zhou1993necessary}, required at
least one of the three assumptions that 1) the control domain was convex, 2)
the diffusion did not depend on the control, and 3) the state equation and cost
functional were both linear in the state variable. So far the general maximum
principle for infinite-dimensional stochastic control systems, i.e., the
counterpart of Peng's result, remained open for a long time. In this paper we
attempt to fill this gap.

In view of the second-order variation method developed by Peng
\cite{peng1990general}, the main difficulty in the infinite-dimensional case
lies in the step of second-order duality analysis, i.e., analyzing the
quadratic term in a variational inequality, which, in the finite-dimensional
case, can be worked out by means of the fact that the auxiliary process, called
the second-order adjoint process, satisfies a well-solved backward stochastic
differential equation (BSDE). However, in the case here, the corresponding BSDE
is operator-valued which seems rather difficult to solve. In this paper, we
develop a new procedure to do the second-order duality analysis: by virtue of
the Lebesgue differentiation theorem and an approximation argument, we prove
that, when the perturbation tends to zero, the quadratic term in the
variational inequality converges to a bilinear functional which can be
represented by an operator-valued process --- this is the second-order adjoint
process. Our approach, we believe, could work not only in
the abstract framework but also in many concrete cases.

Very recently, two other works \cite{lvzhangmaximum,fuhrman2012stochastic}
besides ours were also concerned with the general stochastic maximum principle
in infinite dimensions, while the three ones differ in both forms of state
equations and key approaches. The preprint \cite{lvzhangmaximum}, within an
abstract framework, focused on how to solve the operator-valued BSDE properly,
and to this end, introduced a notion called ``relaxed transposition solution''.
Their approach and result looked restrictive due to some technical assumptions.
In \cite{fuhrman2012stochastic} as well as its long version \cite{tessitore2013stochastic}, the authors considered a concrete stochastic
parabolic PDE with deterministic coefficients. The approach there, including a
compactness argument, required the Markov structure of the system.

The rest of this paper is organized as follows. In Section 2, we formulate the
problem in abstract form, and state our main results. Section 3 is devoted
to derive the basic $L^{p}$-estimate for SEEs. In Section 4, we study the
representation and properties of a stochastic bilinear function. The maximum
principle is proved in Section 5. In the final section, we discuss two examples.

We finish the introduction with several notations. Let
$(\Omega,\mathcal{F},\mathbb{F},\mathbf{P})$ be a filtered probability space
with the filtration $\mathbb{F}=(\mathcal{F}_{t})_{t\geq0}$ generated by a
1-dimensional\footnote{This restriction is just for simplicity. The approaches and results in
this paper can be extended without any nontrivial difficulty to the system
driving by a multi-dimensional Wiener process.} standard Wiener process $\{W_{t};t\geq0\}$ and
satisfying the usual conditions. Denote by $\mathcal{P}$ the predictable
$\sigma$-field. Let $H$ be a separable real Hilbert space and $\mathcal{B}(H)$
be its Borel $\sigma$-field, $p\in\lbrack1,\infty]$. The following classes of
processes will be used in this article.

$\bullet\ \ L^{p}(\Omega\times[0,1],\cP,H)$\thinspace: the space of equivalence
classes of $H$-valued $\mathcal{F}\times\mathcal{B}([0,1])$-measurable processes
$x(\cdot)$ admitting a predictable version such that
$\mathbf{E}\int_{0}^{1}\Vert x(t)\Vert_{H}^{p}\mathrm{d}t<\infty$.

$\bullet\ \ L_{\mathbb{F}}^{p}(\Omega,C([0,1],H))$\thinspace: the space of
$H$-valued adapted processes $x(\cdot)$ with continuous paths such that
$\mathbf{E}\sup_{t\in\lbrack0,1]}\Vert x(t)\Vert_{H}^{p}<\infty$. Elements of
this space are defined up to indistinguishability.

$\bullet\ \ L_{\mathbb{F}}^{p}([0,1],L^{p}(\Omega,H))$\thinspace: the space of
equivalence classes of $H$-valued adapted processes $x(\cdot)$ such that
$x(\cdot):[0,1]\rightarrow L^{p}(\Omega,H)$ is $\mathcal{B}([0,1])$-measurable
and $\int_{0}^{1}\mathbf{E}\Vert x(t)\Vert_{H}^{p}\,\mathrm{d}t<\infty$.

$\bullet\ \ C_{\mathbb{F}}([0,1],L^{p}(\Omega,H))$\thinspace: the space of
$H$-valued adapted processes $x(\cdot)$ such that
$x(\cdot):[0,1]\rightarrow L^{p}(\Omega,H)$ is strongly continuous and
$\sup_{t\in\lbrack0,1]}\mathbf{E}\Vert x(t)\Vert_{H}^{p}<\infty$. Elements of
this space are defined up to modification.

Moreover, denote by $\mathfrak{B}(H)$ the Banach space of all bounded linear
operators from $H$ to itself endowed with the norm $\Vert
T\Vert_{\mathfrak{B}(H)}=\sup\{\Vert Tx\Vert_H:\Vert x\Vert_{H}=1\}$. We shall
define the following spaces with respect to $\mathfrak{B}(H)$-valued processes
and random variables.

$\bullet\ \
L_{\mathrm{w}}^{p}(\Omega\times\lbrack0,1],\cP,\mathfrak{B}(H))$\thinspace: the
space of equivalence classes of $\mathfrak{B}(H)$-valued processes $T(\cdot)$
such that $\langle x,T(\cdot)y\rangle\in L^{p}(\Omega
\times\lbrack0,1],\cP,\mathbb{R})$ for any $x,y\in H$, and $\mathbf{E}\int_{0}
^{1}\Vert T(t)\Vert_{\mathfrak{B}(H)}^{p}\mathrm{d}t<\infty$. Here the
subscript \textquotedblleft$\mathrm{w}$\textquotedblright\ stands for
\textquotedblleft weakly measurable\textquotedblright.

$\bullet\ \
C_{\mathbb{F}}([0,1],L_{\mathrm{w}}^{p}(\Omega,\mathfrak{B}(H)))$\thinspace:
the space of $\mathfrak{B}(H)$-valued processes $T(\cdot)$ such that
$\langle x,T(\cdot)y\rangle\in\mathcal{C}_{\mathbb{F}}([0,1],L^{p}%
(\Omega,\mathbb{R}))$ for any $x,y\in H$, and $\sup_{t\in\lbrack
0,1]}\mathbf{E}\Vert T(t)\Vert_{\mathfrak{B}(H)}^{p}<\infty$. Elements of this
space are defined up to modification.

$\bullet\ \
L_{\mathrm{w}}^{p}(\Omega,\mathcal{F}_{t},\mathfrak{B}(H))$\thinspace: the
space of equivalence classes of $\mathfrak{B}(H)$-valued random variable $T$
such that $\langle x,Ty\rangle\in L^{p}(\Omega,\mathcal{F}_{t},\R)$ for any
$x,y\in H$, and $\mathbf{E}\Vert T\Vert_{\mathfrak{B}(H)}^{p}<\infty$.

\section{Formulation and main results}

\subsection{Problem formulation}

Let $H$ and $V$ be two separable real Hilbert spaces such that $V$ is densely
embedded in $H$. We identify $H$ with its dual space, and denote by $V^{\ast}$
the dual of $V$. Then we have $V\subset H\subset V^{\ast}$. Denote by
$\left\Vert \cdot\right\Vert _{H}$ the norms of $H$, by $\left\langle
\cdot,\cdot\right\rangle $ the inner product in $H$, and by $\left\langle
\cdot,\cdot\right\rangle _{\ast}$ the duality product between $V$ and
$V^{\ast}$. The notation $\mathfrak{B}(X;Y)$ stands for the usual
Banach space of all bounded linear operators from Banach space $X$ to Banach
space $Y$, and simply $\mathfrak{B}(X)=\mathfrak{B}(X; X)$.

Now we consider the controlled stochastic evolution system \eqref{eq:state} in
an abstract way:
\begin{equation*}
\bigg\{\begin{aligned}
\mathrm{d}x(t)  &  =[A(t)x(t)+f(t,x(t),u(t))]\mathrm{d}
t+[B(t)x(t)+g(t,x(t),u(t))]\,\mathrm{d}W_{t},\\
x(0)  &  =x_{0},
\end{aligned}
\end{equation*}
with the control process $u(\cdot)$ taking values in a set $U$, given
stochastic
evolution operators%
\[
A:\Omega\times[0,1]\rightarrow\mathfrak{B}(V; V^{\ast}%
)\ \ \ \ \text{and}\ \ \ \ B:\Omega\times[0,1]\rightarrow\mathfrak{B}%
(V; H),
\]
and nonlinear terms%
\[
f,\,g:\Omega\times[0,1]\times H\times U\rightarrow H.
\]
Here the \emph{control set} $U$ is a nonempty Borel-measurable subset of a
metric space whose metric is denoted by $\mathrm{dist}(\cdot,\cdot)$. Fix an
element (denoted by $0$) in $U$, and then define $|u|_{U}=\mathrm{dist}(u,0)$.
A $U$-valued predictable process $u(\cdot)$ is \emph{admissible} if there
exists a number $\delta_u>0$ such that
\[
\sup\big\{\mathbf{E}\left\vert u(t)\right\vert _{U}^{4+\delta_u}:t\in[0,1]\big\}<\infty.
\]
Denote by $U_{\mathrm{ad}}$ the set of all admissible controls.

Our optimal control problem is to find $u(\cdot)\in U_{\mathrm{ad}}$
to minimize the cost functional
\[
J(u(\cdot))=\mathbf{E}\int_{0}^{1}l(t,x(t),u(t))\,\mathrm{d}t+\mathbf{E}%
h(x(1))
\]
with given functions%
\[
l:\Omega\times[0,1]\times H\times U\rightarrow\mathbb{R}
\ \ \ \text{and\ \ \ }h:\Omega\times H\rightarrow\mathbb{R}.
\]

Throughout this paper, the SEE of form \eqref{eq:state} is read in a weak sense, i.e., for each $v\in V$,
\begin{align*}
\langle v, x(t)\rangle ~=~ & \langle v,x_0\rangle + \int_0^t \big[\langle v, A(t)x(t)\rangle_{*}+\langle v,f(t,x(t),u(t))\rangle\big]\,\mathrm{d}t\\
&+\int_0^1 \langle v, B(t)x(t)+g(t,x(t),u(t))\rangle \,\mathrm{d}W_{t}, \quad {\rm a.e.}~(\omega,t).
\end{align*}

We make the following assumptions. Fix some constants $\kappa\in(0,1)$ and
$K\in(0,\infty)$.

\begin{assumption}
\label{ass:onAB} The operator processes $A$ and $B$ are weakly predictable,
i.e., $\left\langle x,A(\cdot)y\right\rangle _{\ast}$ and $\left\langle
x,B(\cdot)y\right\rangle $ are both predictable processes for any $x,y\in V$; and
for each $(t,\omega)\in
\lbrack0,1]\times\Omega$,%
\begin{align*}
&  \left\langle x,A(t)x\right\rangle _{\ast}+\left\Vert B(t)x\right\Vert
_{H}^{2}\leq-\kappa\left\Vert x\right\Vert _{V}^{2}+K\left\Vert x\right\Vert
_{H}^{2},\\
&  \left\Vert A(t)x\right\Vert _{V^{\ast}}^{2}\leq K\left\Vert x\right\Vert
_{V}^{2},\ \ \ \ \ \ \ \forall x\in V.
\end{align*}
\end{assumption}
\begin{assumption}
\label{ass:onfglh} For each $(x,u)\in H\times U$, $f(\cdot,x,u)$,
$g(\cdot,x,u)$ and $l(\cdot,x,u)$ are all predictable processes, $h(x)$ is
$\mathcal{F}_{1}$-measurable random variable; for each
$(t,u,\omega)\in[0,1]\times U \times\Omega$, $f,g,l$ and $h$ are globally twice
Fr\`{e}chet differentiable with respect to $x$. The functions $f_{x}
,g_{x},f_{xx},g_{xx},l_{xx},h_{xx}$ are continuous in $x$ and dominated by the
constant $K$; $f,g,l_{x},h_{x}$ are dominated by $K(1+\Vert x\Vert_{H}+|u|_{U})
$; $l,h$ are dominated by $K(1+\Vert x\Vert_{H}^{2}+|u|_{U}^{2})$.
\end{assumption}
\begin{assumption}
\label{ass:onB} For each $(t,\omega)$,
\[
\left\vert \left\langle x,B(t)x\right\rangle \right\vert \leq K\left\Vert
x\right\Vert _{H}^{2},\ \ \ \ \ \ \ \forall x\in V.
\]
Here we call this condition \emph{the quasi-skew-symmetry}.
\end{assumption}

\begin{remark}\label{rem:ass}\rm
(1) Assumption \ref{ass:onAB} is a kind of \emph{coercivity condition} (cf.
\cite{krylov1981stochastic}), which ensures the solvability of SEEs of form
\eqref{eq:state}. Indeed, in view of a well-known result
(\cite{krylov1981stochastic}), SEE \eqref{eq:state} has a unique solution
$x(\cdot)\in L^{2}_{\mathbb{F}}(\Omega;C([0,1];H))\cap L^{2}(\Omega\times
[0,1],\cP,V)$ for each $u(\cdot)\in U_{\text{ad}}$ under Assumptions
\ref{ass:onAB} and \ref{ass:onfglh}.

(2) In this paper, the \emph{quasi-skew-symmetric condition} is used to
establish the $L^{p}$-estimate ($p>2$) for solutions to stochastic
evolution equations. Such a condition is refined from many concrete examples,
for instance, the nonlinear filtering equation, and other
stochastic parabolic PDEs (cf.
\cite{chow2007stochastic,rozovsky1990stochastic}). One can give other
characterizations of this condition. Indeed, for any given
$B\in\mathfrak{B}(V; H)$, the following statements are equivalent:
(i) for any $x\in V$, $\vert\langle x,Bx\rangle\vert\leq K\Vert
x\Vert_{H}^{2}$; (ii) $B+B^{\ast}\in\mathfrak{B}(H)$, where $B^{\ast}$ is the
dual operator of $B$; and (iii) there are a skew-symmetric operator
$S\in\mathfrak{B}(V; H)$ and a symmetric operator $T\in\mathfrak{B}(H)$ such
that $B=S+T$.
\end{remark}

\subsection{Main results}

The main theorem in the paper is Theorem \ref{thm:mp} --- the maximum
principle. As a preliminary, we first state the following result. Hereafter we
denote $\mathbf{E}_{t}[\,\cdot\,]=\mathbf{E} [\,\cdot\,|\mathcal{F}_{t}]$.

\begin{theorem}\label{thm:repres}
Let $A$ and $B$ satisfy Assumptions \ref{ass:onAB} and
\ref{ass:onB}, $M \in
L_{\mathrm{w}}^{2}(\Omega,\mathcal{F}_{1},\mathfrak{B}(H))$ and $N(\cdot) \in
L_{\mathrm{w}}^{2}(\Omega\times[0,1],\cP,\mathfrak{B}(H))$. In this case, we
say the four-tuple $(A,B;M,N)$ is \emph{``appropriate''}. We formally define,
for each $\tau\in\lbrack0,1)$, a stochastic bilinear function on the Banach
space $L^{4}(\Omega,\mathcal{F}_{\tau},H)$, associated with the four-tuple
$(A,B;M,N)$, as the form:
\begin{equation}\label{eq:defofT}
\lbrack T_{\tau}(A,B;M,N)](\xi_1,\xi_2):=\mathbf{E}_{\tau}\left\langle
z^{\tau,\xi_1}(1),Mz^{\tau,\xi_2}(1)\right\rangle +\mathbf{E}_{\tau}\int_{\tau
}^{1}\left\langle z^{\tau,\xi_1}(t),N(t)z^{\tau,\xi_2}(t)\right\rangle
\,\mathrm{d}t
\end{equation}
with the processes $z^{\tau,\xi_i}(\cdot)$ $(i=1,2)$ satisfying
\begin{equation}
z^{\tau,\xi_i}(t)=\xi_i+\int_{\tau}^{t}A(s)z^{\tau,\xi_i}(s)\,\mathrm{d}s+\int
_{\tau}^{t}B(s)z^{\tau,\xi_i}(s)\,\mathrm{d}W_{s},\ \ \ \ t\in\lbrack
\tau,1].\label{eq:defofz}
\end{equation}

Under the above setting, we have the following assertions:

{\rm a)} For each $\tau\in\lbrack0,1]$ and any $\xi,\zeta\in
L^{4}(\Omega,\mathcal{F}_{\tau},H)$, $[T_{\tau}(A,B;M,N)](\xi,\zeta)$ is
uniquely determined and belongs to $L^1(\Omega,\cF_{\tau},\R)$.

{\rm b)} There exists a unique 
$${P}_{\cdot}\in
L_{\mathrm{w}}^{2}(\Omega\times\lbrack0,1],\cP,\mathfrak{B}(H))
\cap
C_{\mathbb{F}}([0,1],L_{\mathrm{w}}^{2}(\Omega,\mathfrak{B}(H)))
$$ 
such that for
each $\tau\in\lbrack0,1]$ and any $\xi,\zeta\in L^{4}
(\Omega,\mathcal{F}_{\tau},H)$,
\begin{equation}
\left\langle \xi,{P}_{\tau}\zeta\right\rangle =\left[
T_{\tau}(A,B;M,N)\right] (\xi,\zeta)\ \ (\text{a.s.})\text{.}
\label{eq:relationofPT}
\end{equation}
We call ${P}_{\cdot}$ the \emph{representation} of
$T_{\cdot}(A,B;M,N)$.
\end{theorem}

The proof of this theorem is placed in Subsection 4.1.

Our main result is the following

\begin{theorem}
[Maximum Principle]\label{thm:mp}Let Assumptions \ref{ass:onAB}--\ref{ass:onB}
be satisfied. Define the Hamiltonian $\mathcal{H}:[0,1]\times H\times U\times
H\times H\to\R$ as the form
\begin{equation}\label{eq:hamiltonian}
\mathcal{H}(t,x,u,p,q):=l(t,x,u)+\left\langle p,f(t,x,u)\right\rangle
+\left\langle q,g(t,x,u)\right\rangle.
\end{equation}
Suppose $\bar{x}(\cdot)$ is the state process with
respect to an optimal control $\bar{u}(\cdot)$. Then

\emph{i)} \emph{(first-order adjoint process)} the backward stochastic
evolution equation (BSEE)
\begin{equation}\label{eq:1adjoint}
\bigg\{\begin{aligned}
\text{\emph{d}}p(t)  &  =-[A^{\ast}(t)p(t)+B^{\ast}(t)q(t)+\mathcal{H}%
_{x}(t,\bar{x}(t),\bar{u}(t),p(t),q(t))]\,\text{\emph{d}}%
t+q(t)\,\text{\emph{d}}W_{t},\\
p(1)  &  =h_{x}(\bar{x}(1))
\end{aligned}
\end{equation}

has a unique (weak) solution\,\footnote{For the definition of (weak) solutions
to BSEEs, we refer to \cite[Def. 3.1]{du2010revisit}} $(p(\cdot),q(\cdot))$;

\emph{ii)} \emph{(second-order adjoint process) }the four-tuple $(\tilde
{A},\tilde{B};\tilde{M},\tilde{N})$ with
\begin{align*}
&  \tilde{A}(t) :=A(t)+f_{x}(t,\bar{x}(t),\bar{u}(t)),~~\tilde{B}%
(t):=B(t)+g_{x} (t,\bar{x}(t),\bar{u}(t)),\\
&  \tilde{M} :=h_{xx}(\bar{x}(1)),~~\tilde{N}(t):=\mathcal{H}_{xx}(t,\bar
{x}(t),\bar{u}(t),p(t),q(t))
\end{align*}
is \emph{``appropriate"}; then by Theorem \ref{thm:repres} there exists a
unique weakly predictable operator-valued process ${P}_{\cdot}$
as the
{representation} of $T_{\cdot}(\tilde
{A},\tilde{B};\tilde{M},\tilde{N})$;

\emph{iii)} \emph{(maximum condition) }for each $u\in U$, the inequality
\begin{align}\label{eq:mpcondition}
&  \mathcal{H}(t,\bar{x}(t),u,p(t),q(t))-\mathcal{H}(t,\bar
{x}(t),\bar{u}(t),p(t),q(t))\\
&  +\frac{1}{2}\left\langle g(t,\bar{x}(t),u)-g(t,\bar{x}(t
),\bar{u}(t)),P_{t}[g(t,\bar{x}(t),u)-g(t,\bar{x}(t),\bar{u}%
(t))]\right\rangle \geq 0 \nonumber
\end{align}
holds for a.e. $(t,\omega)\in[0,1)\times\Omega$.
\end{theorem}

The proof of this theorem will be completed in Section 5.

\begin{remark}\rm
i) The inequality \eqref{eq:mpcondition} holds almost surely on the product space $[0,1)\times\Omega$, while the predictable version of process $P_{\cdot}$ insures the $(t,\omega)$-joint measurability of the left-hand side of this inequality.

ii) Let us single out a couple of important special cases: 1) The diffusion does
not contain the control variable, i.e., $g(t,x,u)\equiv g(t,x)$. In this case,
\eqref{eq:mpcondition} becomes
\[\mathcal{H}(t,\bar{x}(t),\bar{u}(t),p(t),q(t))=\min_{u\in
U}\mathcal{H}(t,\bar{x}(t),u,p(t),q(t)).\] This is a well known result, cf.
\cite{hu1990maximum}. 2) The control domain (a subset of a separable Hilbert
space $U$) is convex and all the coefficients are ${C}^1$ in $u$. Then from
\eqref{eq:mpcondition} we can deduce
\[\langle \mathcal{H}_u(t,\bar{x}(t),\bar{u}(t),p(t),q(t)), u -
\bar{u}(t)\rangle_U \ge 0,\quad \forall\,u\in U,\textrm{~a.e.~}(t,\omega).\]
This is called a local form of the maximum principle, coinciding with the
result of Bensoussan \cite{bensoussan1983stochastic}. In both cases, Assumption
\ref{ass:onfglh} can be weakened, i.e., only the first G\^ateaux
differentiability of the coefficients is in force.
\end{remark}

\section{$L^{p}$-estimates for stochastic evolution equations}

The $L^{p}$-estimate of the solutions to stochastic evolution equations plays a basic role
in our approach. Now we consider the following linear equation
\begin{equation}\label{eq:linearSEE}
\bigg\{\begin{aligned}
\mathrm{d}y(t)  &  =[\tilde{A}(t)y(t)+a(t)]\,\mathrm{d}t+[\tilde{B}(t)y(t)+b(t)]\mathrm{d}%
W_{t},\\
y(0)  &  =y_{0}\in H.
\end{aligned}
\end{equation}
Under Assumption \ref{ass:onAB}, the above equation has a unique solution
$y(\cdot)\in L^{2}_{\mathbb{F}}(\Omega,C([0,1],H))$ providing
$a(\cdot),b(\cdot)\in L^{2}(\Omega\times[0,1],\cP,H))$, see
\cite{krylov1981stochastic}. If, in addition, the operator $\tilde{B}$ satisfies
\emph{the quasi-skew-symmetric condition}, then we have

\begin{lemma}
\label{lem:Lpestimate} Let $\tilde{A}$ and $\tilde{B}$ satisfy Assumptions \ref{ass:onAB} and
\ref{ass:onB}, and $p\ge 1$. Then the solution to equation \eqref{eq:linearSEE} satisfies
\[
\mathbf{E}\sup_{t\in[0,1]}\left\Vert y(t)\right\Vert _{H}^{2p}\leq
C(\kappa,K,p)\,\E \bigg[\Vert y_{0}\Vert_{H}^{2p}+\bigg(\int_0^1 \Vert a(t)\Vert_H\d t\bigg)^{2p} + \bigg(\int_0^1 \Vert b(t)\Vert_H^2 \d t\bigg)^{p}\bigg],
\]
provided the right-hand side is finite.
\end{lemma}

\begin{proof}
In view of Assumptions
\ref{ass:onAB} and \ref{ass:onB}, we have
\begin{equation}\label{l311}
\Vert \tilde{B}y(t)\Vert _{H}^{2}\leq C(K)\Vert y(t)\Vert
_{V}^{2},\quad \langle y(t),\tilde{B}y(t)\rangle \leq K\Vert
y(t)\Vert _{H}^{2},
\end{equation}
and furthermore,
\begin{align}
& 2\langle
y(t),\tilde{A}y(t)+a(t)\rangle _{\ast}+\Vert \tilde{B}y(t)+b(t)\Vert _{H}^{2}    \label{l312}\\
=\,& 2\langle
y(t),\tilde{A}y(t)\rangle _{\ast}+\Vert \tilde{B}y(t)\Vert _{H}
^{2}+2\langle \tilde{B}y(t),b(t)\rangle +  \Vert b(t)\Vert_{H}^{2}+2\langle
y(t),a(t)\rangle \nonumber\\
\le\,& -\kappa \Vert y(t)\Vert^2_{V} + K \Vert y(t)\Vert^2_{H} + C(K)\Vert y(t)\Vert_{V}\Vert b(t)\Vert_{H} + \Vert b(t)\Vert_{H}^{2}+ 2\langle
y(t),a(t)\rangle \nonumber\\
\le\,&  K \Vert y(t)\Vert^2_{H} + C(\kappa,K)\Vert b(t)\Vert^2_{H} + 2\Vert y(t)\Vert_{H}\Vert a(t)\Vert_{H}, \nonumber\\
&\vert \langle y(t) ,\tilde{B}y(t)+b(t)\rangle \vert ^{2}
\le 2\vert \langle y(t),\tilde{B}y(t)\rangle \vert ^{2}
+ 2\vert \langle y(t),b(t)\rangle \vert ^{2}   \label{l313}\\
\le\,& 2 K^2 \Vert y(t)\Vert^4_{H} + 2\Vert y(t)\Vert^2_{H}\Vert b(t)\Vert^2_{H}. \nonumber 
\end{align}
Define stopping time $\tau_{k}:=\inf\{t\in[0,1):\Vert y(t)\Vert _{H}^{2p}>k\}
\wedge 1$. With $\lambda,\epsilon>0$, it follows from the
Burkholder-Davis-Gundy and H\"older inequalities that
\begin{align}
&\E\sup_{t\in[0,\tau_{k}]}\left\vert\int_0^{t} \e^{-\lambda s}\Vert y(s)\Vert _{H}^{2(p-1)}\langle
y(s),\tilde{B}y(s)+b(s)\rangle \,\mathrm{d}W_{s}\right\vert   \label{l31m}\\
\le ~& C\,\E\left[\int_0^{\tau_{k}} \e^{-2\lambda s}\Vert y(s)\Vert _{H}^{4(p-1)}\vert\langle
y(s),\tilde{B}y(s)+b(s)\rangle\vert^2 \,\mathrm{d}s\right]^{1/2} \nonumber\\
\le ~& C\,\E\left[\Big(\sup_{s\in[0,\tau_{k}]}\e^{-\lambda s}\Vert y(s)\Vert _{H}^{2p}\Big)\int_0^{\tau_{k}} \e^{-\lambda s}\Big(\Vert y(s)\Vert _{H}^{2p}+  \Vert y(s)\Vert _{H}
^{2p-2}\Vert b(s)\Vert _{H}^{2}\Big)\,\mathrm{d}s\right]^{1/2} \nonumber\\
\le ~& \epsilon\,\E \sup_{s\in[0,\tau_{k}]}\e^{-\lambda s}\Vert y(s)\Vert _{H}^{2p} + \frac{C}{\eps}\E \int_0^{\tau_{k}} \e^{-\lambda s}\Big(\Vert y(s)\Vert _{H}^{2p}+  \Vert y(s)\Vert _{H}
^{2p-2}\Vert b(s)\Vert _{H}^{2}\Big)\,\mathrm{d}s \nonumber\\
&\E \int_0^{\tau_{k}} \e^{-\lambda s}\Big(\Vert y(s)\Vert _{H}
^{2p-1}\Vert a(s)\Vert _{H}
+ \Vert y(s)\Vert _{H}
^{2p-2}\Vert b(s)\Vert _{H}^{2}\Big)\,\mathrm{d}s    \label{l31ab}\\
\le~ & \epsilon\,\E \sup_{s\in[0,\tau_{k}]}\e^{-\lambda s}\Vert y(s)\Vert _{H}^{2p} + \frac{C}{\eps}\E \bigg[\bigg(\int_0^{\tau_k} \e^{-\frac{\lambda s}{2p}} \Vert a(s)\Vert_H\,\d s\bigg)^{2p} + \bigg(\int_0^{\tau_k}
\e^{-\frac{\lambda s}{p}}\Vert b(s)\Vert_H^2 \,\d s\bigg)^{p}\bigg]\nonumber\\
\le~ & \epsilon\,\E \sup_{s\in[0,\tau_{k}]}\e^{-\lambda s}\Vert y(s)\Vert _{H}^{2p} + \frac{C}{\eps}\E \bigg[\bigg(\int_0^1 \Vert a(s)\Vert_H\,\d s\bigg)^{2p} + \bigg(\int_0^1 \Vert b(s)\Vert_H^2 \,\d s\bigg)^{p}\bigg]. \nonumber 
\end{align}

On the other hand, applying the It\^{o} formula (see
\cite{krylov1981stochastic}) to $\e^{-\lambda t}\Vert y(t)\Vert _{H}^{2p}$, we
have
\begin{align}
&\e^{-\lambda (t\wedge\tau_{k})}\Vert y(t\wedge\tau_{k})\Vert _{H}^{2p}-\Vert y_0 \Vert_{H}^{2p}
+ \lambda \int_0^{t\wedge\tau_{k}} \e^{-\lambda s}\Vert y(s)\Vert _{H}^{2p}\,\mathrm{d}s    \label{l31i}\\
=~& p \int_0^{t\wedge\tau_{k}} \e^{-\lambda s}\Vert y(s)\Vert _{H}^{2(p-1)}\big(2\langle
y(s),\tilde{A}y(s)+a(s)\rangle _{\ast}+\Vert \tilde{B}y(s)+b(s)\Vert _{H}^{2}\big)\,\mathrm{d}s \nonumber\\
& +2p(p-1) \int_0^{t\wedge\tau_{k}} \e^{-\lambda s}\Vert y(s)\Vert _{H}^{2(p-2)}\vert\langle
y(s),\tilde{B}y(s)+b(s)\rangle\vert^2\,\mathrm{d}s \nonumber\\
& + 2p \int_0^{t\wedge\tau_{k}} \e^{-\lambda s}\Vert y(s)\Vert _{H}^{2(p-1)}\langle
y(s),\tilde{B}y(s)+b(s)\rangle \,\mathrm{d}W_{s} \nonumber
\end{align}
Now we take $\E\sup_{t\in[0,\tau_k]}$ on both sides of the above equality. Then
by virtue of \eqref{l312}--\eqref{l31i} with sufficiently large $\lambda$ and
small $\epsilon$, we obtain
\begin{align*}
& \E \sup_{t\in[0,\tau_{k}]} \e^{-\lambda t}\Vert y(t)\Vert _{H}^{2p}
\leq C(\kappa,K,p)\,\E \bigg[\Vert y_{0}\Vert_{H}^{2p}+\bigg(\int_0^1 \Vert a(t)\Vert_H\d t\bigg)^{2p} + \bigg(\int_0^1 \Vert b(t)\Vert_H^2 \d t\bigg)^{p}\bigg].
\end{align*}
Sending $k$ to infinity, we can easily conclude the lemma.
\end{proof}

\begin{remark}{\rm
The \emph{quasi-skew-symmetric condition} is unnecessary in the case of
$p=1$, since the term $2p(p-1)\Vert y(t)\Vert_{H}^{2(p-2)}|\langle
y(t),\tilde{B}y(t)+b(t)\rangle|^{2}\,\mathrm{d}t$ does not appear in this case.
}\end{remark}

Proceeding a similar argument, we have the $L^{p}$-estimate for SEE \eqref{eq:state}.

\begin{corollary}
\label{cor:Lpforstateeq} Under Assumptions \ref{ass:onAB}--\ref{ass:onB}, the
solution $x(\cdot)$ to SEE \eqref{eq:state} satisfies
\[
\mathbf{E}\sup_{t\in\lbrack0,1]}\left\Vert x(t)\right\Vert _{H}^{p}\leq
C(\kappa,K)\sup_{t\in\lbrack0,1]} \mathbf{E}\big(1+\left\vert u(t)\right\vert
_{U}^{p}\big)
\]
with $p\in[2,4+\delta]$.
\end{corollary}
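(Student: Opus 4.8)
\textbf{Proof proposal for Corollary \ref{cor:Lpforstateeq}.}

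The plan is to reduce the nonlinear SEE \eqref{eq:state} to the linear template \eqref{eq:linearSEE} and then invoke Lemma \ref{lem:Lpestimate}. Concretely, I would set $a(t):=f(t,x(t),u(t))$ and $b(t):=g(t,x(t),u(t))$, so that $x(\cdot)$ solves \eqref{eq:linearSEE} with $y_0=x_0$ and these particular $a,b$. Then Lemma \ref{lem:Lpestimate} gives, for each $n\in\{1,2\}$,
\[
\sup_{t\in[0,1]}\mathbf{E}\Vert x(t)\Vert_H^{2n}\leq C(\kappa,K,n)\Big(\Vert x_0\Vert_H^{2n}+\alpha^{2n}+\beta^{n}\Big),
\]
with $\alpha=\int_0^1(\mathbf{E}\Vert f(s,x(s),u(s))\Vert_H^{2n})^{1/2n}\,\mathrm{d}s$ and $\beta=\int_0^1(\mathbf{E}\Vert g(s,x(s),u(s))\Vert_H^{2n})^{1/n}\,\mathrm{d}s$. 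By Assumption \ref{ass:onfglh}, $\Vert f(s,x(s),u(s))\Vert_H$ and $\Vert g(s,x(s),u(s))\Vert_H$ are both bounded by $K(1+\Vert x(s)\Vert_H+|u(s)|_U)$, so by the elementary inequality $(a+b+c)^{2n}\leq C(n)(a^{2n}+b^{2n}+c^{2n})$ one gets $\mathbf{E}\Vert f(s,x(s),u(s))\Vert_H^{2n}\leq C(K,n)(1+\mathbf{E}\Vert x(s)\Vert_H^{2n}+\mathbf{E}|u(s)|_U^{2n})$, and likewise for $g$.

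The subtlety is that this estimate is circular: $\alpha$ and $\beta$ involve the very quantity $\sup_t\mathbf{E}\Vert x(t)\Vert_H^{2n}$ we are trying to bound, so one cannot simply plug in. The standard remedy I would use is a stopping-time localization identical to the one in the proof of Lemma \ref{lem:Lpestimate}: rerun the It\^o computation \eqref{eq:Lpcalculus} directly for $\Vert x(t)\Vert_H^{2n}$ with $a,b$ replaced by $f,g$, stop at $\tau^k:=\inf\{t:\mathbf{E}\Vert x(t)\Vert_H^{2n}>k\}\wedge 1$, and absorb the $\mathbf{E}\Vert x(t\wedge\tau^k)\Vert_H^{2n}$ terms coming from the $f,g$ bounds into the left-hand side via Young's inequality (paying attention that the $x$-dependent part of the bound on $\Vert f\Vert_H,\Vert g\Vert_H$ contributes terms of the form $\Vert x\Vert_H^{2n-1}\cdot K\Vert x\Vert_H=K\Vert x\Vert_H^{2n}$ and $\Vert x\Vert_H^{2n-2}\cdot K^2\Vert x\Vert_H^2=K^2\Vert x\Vert_H^{2n}$, which are handled by Gronwall rather than Young). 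After Gronwall and letting $k\to\infty$, the $x$-free remainders are controlled by $1+\sup_t\mathbf{E}|u(t)|_U^{2n}$; note this is finite for $n=1,2$ precisely because an admissible control satisfies $\sup_t\mathbf{E}|u(t)|_U^4<\infty$, which is why the statement restricts to $n=1,2$.

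The main obstacle — really the only non-bookkeeping point — is making the circular dependence rigorous, i.e.\ verifying a priori that $\sup_t\mathbf{E}\Vert x(t)\Vert_H^{2n}<\infty$ so that $\tau^k\uparrow 1$ and the absorbed terms are genuinely finite before being absorbed. For $n=1$ this is already guaranteed by $x(\cdot)\in C([0,1],L^2(\Omega,H))$ (the known well-posedness result cited after Assumption \ref{ass:onfglh}); for $n=2$, the stopping-time argument is self-contained since at each level $k$ the quantity $\sup_t\mathbf{E}\Vert x(t\wedge\tau^k)\Vert_H^{2n}\leq k$ is finite by construction, so the Young-inequality absorption is legitimate, and the resulting bound is uniform in $k$. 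Everything else is the routine repetition of the computation in Lemma \ref{lem:Lpestimate}, which is why I would simply write ``proceeding as in the proof of Lemma \ref{lem:Lpestimate}'' and only spell out the extra $x$-dependent terms arising from the growth bounds on $f$ and $g$.
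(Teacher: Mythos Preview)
Your proposal is correct and matches the paper's approach exactly: the paper gives no detailed proof for this corollary, saying only ``proceeding a similar argument,'' which means precisely what you describe --- rerunning the It\^o computation \eqref{eq:Lpcalculus} for the nonlinear equation, handling the extra $\Vert x\Vert_H^{2n}$ terms from the linear-growth bounds on $f,g$ via Gronwall, and closing with the same stopping-time localization. Your discussion of the circularity and why the $\tau^k$ argument resolves it is more careful than anything the paper writes down.
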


\section{Investigation into a stochastic bilinear function}

The purpose of this section is to prove Theorem \ref{thm:repres} and another
important property of the representation process ${P}_{\cdot}$.

\subsection{Proof of Theorem \ref{thm:repres}}

For convenience, we write $T_{\tau}(A,B;M,N)$ as $T_{\tau}$ to the end of this
section, and hereafter denote $C=C(\kappa,K).$

\emph{Step 1.} In view of a known result (cf. \cite{krylov1981stochastic}) , equation
\eqref{eq:defofz} has a unique solution for each $\xi\in
L^{4}(\Omega,\mathcal{F}_{\tau},H)$; moreover, it follows from Lemma \ref{lem:Lpestimate} that
\begin{equation}
\mathbf{E}_{\tau}\sup_{t\in\lbrack\tau,1]}\left\Vert z^{\tau
,\xi}(t)\right\Vert _{H}^{4}\leq C\left\Vert \xi\right\Vert _{H}^{4}.
\label{eq:estforz}
\end{equation}
Indeed, for any set $E\in \cF_{\tau}$, we obtain from Lemma \ref{lem:Lpestimate} that
\begin{equation*}
\mathbf{E}\Big({\bf 1}_{E}\cdot\sup_{t\in\lbrack\tau,1]}\left\Vert z^{\tau
,\xi}(t)\right\Vert _{H}^{4}\Big)
=\mathbf{E}\sup_{t\in\lbrack\tau,1]}\left\Vert z^{\tau
,{\bf 1}_{E}\xi}(t)\right\Vert _{H}^{4}\leq C \,\E\left\Vert {\bf 1}_{E}\xi\right\Vert _{H}^{4} = C \,\E\big({\bf 1}_{E}\cdot\left\Vert \xi\right\Vert _{H}^{4}\big),
\end{equation*}
which implies \eqref{eq:estforz}. 
In what follows, we define
\begin{equation}\label{eq:condofMN}
\Lambda:=\left\Vert M\right\Vert _{\mathfrak{B}(H)}^{2}+\int_{0}^{1}\left\Vert
N(t)\right\Vert _{\mathfrak{B}(H)}^{2}\,\mathrm{d}t\in
L^{1}(\Omega,\mathcal{F}_{1},\mathbb{R}), 
\end{equation}
and $\Lambda_t = \mathbf{E}_t[\Lambda]$. The process $(\Lambda_t)_{t\ge 0}$ is
a Doob's martingale and has a continuous version.
Then we know, fo each $\tau\in[0,1]$ and  any $\xi,\zeta\in
L^{4}(\Omega,\mathcal{F}_{\tau},H)$,
\begin{equation}\label{eq:proof:401}
\vert T_{\tau}(\xi,\zeta) \vert \leq 
C \sqrt{\Lambda_\tau} \Vert \xi \Vert_H \Vert \zeta \Vert_H \quad \textrm{(a.s.)}.
\end{equation}

Therefore, for any $\xi,\zeta\in
L^{4}(\Omega,\mathcal{F}_{\tau},H)$, $T_{\tau}(\xi,\zeta)$ is uniquely
determined and, by the H\"older inequality, belongs to
$L^{1}(\Omega,\mathcal{F}_{\tau},\R)$. The assertion (a) is proved.

\emph{Step 2. } Next we shall prove 
\begin{equation}\label{con.T}
T_{\cdot}(x,y)\in
C_{\mathbb{F}}([0,1],L^2(\Omega,\R)),\quad \forall~x,y\in H.
\end{equation}
For convenience, we denote
\begin{equation}\label{def.Y}
Y^{x,y}_{t}:=\left\langle
z^{t,x}(1),Mz^{t,y}(1)\right\rangle +\int_{t}^{1}\left\langle
z^{t,x}(r),N(r)z^{t,y}(r)\right\rangle \,\mathrm{d}r,
\quad \textrm{for }x,y\in H,
\end{equation}
with $t\in [0,1)$, and
then  $T_{t}(x,y)= \E_t [Y^{x,y}_t]$. Now we have
\begin{align*}
\lim_{s\rightarrow t}\mathbf{E}\left\vert \mathbf{E}_{s}[Y^{x,y}_{s}]-\mathbf{E}%
_{t}[Y^{x,y}_{t}]\right\vert  &  =\lim_{s\rightarrow t}\mathbf{E}\left\vert
\mathbf{E}_{s}[Y^{x,y}_{s}-Y^{x,y}_{t}]-\left(  \mathbf{E}_{t}[Y^{x,y}_{t}]-\mathbf{E}_{s}%
[Y^{x,y}_{t}]\right)  \right\vert \\
&  \leq \lim_{s\rightarrow t}\mathbf{E}\left\vert Y^{x,y}_{s}-Y^{x,y}_{t}\right\vert
+\lim_{s\rightarrow t}\mathbf{E}\left\vert \mathbf{E}_{t}[Y^{x,y}_{t}%
]-\mathbf{E}_{s}[Y^{x,y}_{t}]\right\vert.
\end{align*}
Without loss of generality, we assume $t<s$. On the one hand, the process
$(\mathbf{E}_{r}[Y^{x,y}_{t}])_{r\geq0}$ is a uniformly integrable martingale, thus
it follows from the Doob martingale convergence theorem (cf.
\cite{doob1953stochastic}) that
\[
\lim_{s\rightarrow t}\mathbf{E}\left\vert \mathbf{E}_{t}[Y^{x,y}_{t}]-\mathbf{E}%
_{s}[Y^{x,y}_{t}]\right\vert=0.
\]
On the other hand, note that
\begin{align*}
\left\vert Y^{x,y}_{s}-Y^{x,y}_{t}\right\vert \leq~  &  \left\vert \left\langle
z^{s,x}(1),M z^{s,y}(1)\right\rangle-\left\langle z^{t,x}(1),M
z^{t,y}(1)\right\rangle \right\vert \\
&  +\int_{s}^{1}\left\vert \left\langle z^{s,x}(r),N(r)z^{s,y}(r)\right\rangle
-\left\langle z^{t,x}(r),N(r)z^{t,y}(r)\right\rangle
\right\vert \mathrm{d}r\\
&  +\int_{t}^{s}\left\vert \left\langle z^{t,x}(r),N(r)z^{t,y}(r)\right\rangle
\right\vert \mathrm{d}r\\
=:\,  &  \, I_{1}+I_{2}+I_{3}.
\end{align*}
First, it follows from \eqref{eq:estforz} and the H\"older inequality that
\begin{equation}
\left\vert\,\mathbf{E} I_{3}\right\vert ^{2}\leq C\Lambda_{0}\left\vert
t-s\right\vert \left\Vert x\right\Vert _{H}^{2}\left\Vert y\right\Vert
_{H}^{2}\rightarrow0, \label{eq:proof.301}
\end{equation}
as$\ \left\vert s-t\right\vert \rightarrow0$. Next, from \eqref{eq:estforz} and
the continuity of the solution to \eqref{eq:defofz}, we have
\begin{align*}
\left\vert\,\mathbf{E} I_{2}\right\vert ^{2}~  \leq~&  C\Lambda_{0}
\Big(\left\Vert x\right\Vert _{H}^{2}\sqrt{\mathbf{E}\left\Vert y-z^{t,y}
(s)\right\Vert _{H}^{4}}\\
&  +\left\Vert y\right\Vert _{H}^{2}\sqrt{\mathbf{E}\left\Vert x-z^{t,x}
(s)\right\Vert _{H}^{4}}\,\Big)\rightarrow0,\ \ \text{as}\ \left\vert
s-t\right\vert \rightarrow0.
\end{align*}
Similarly, we can show
\[
\left\vert\,\mathbf{E} I_{1}\right\vert ^{2}\rightarrow0,\ \ \ \ \text{as\ }
\ \left\vert s-t\right\vert \rightarrow0.
\]
Therefore, we have
\begin{equation}\label{eq:L1}
\lim_{s\rightarrow t}\mathbf{E}\big\vert T_{s}(x,y)-T_{t}(x,y) \rangle\big\vert=0.
\end{equation}
This implies $T_{\cdot}(x,y)\in {C}_{\mathbb{F}}
([0,1],L^{1}(\Omega,\mathbb{R}))$. Next, it follows from \eqref{eq:proof:401}
and the Doob martingale convergence theorem that
\begin{align*}\label{eq:proof.302}
\big\vert T_{s}(x,y)-T_{t}(x,y) \big\vert^2
~\le~ & C \Vert
x\Vert_H^2\Vert y\Vert_H^2\cdot(\Lambda_s+\Lambda_t)\nonumber\\
\mathop{\longrightarrow}^{L^1}~ &C \Vert x\Vert_H^2\Vert y\Vert_H^2\cdot
2\Lambda_t\ \ \ \ \textrm{as}\ s\to t.
\end{align*}
This along with \eqref{eq:L1} and the Lebesgue dominated convergence theorem
yields
\begin{equation}\label{eq:proof.303}
\lim_{s\rightarrow t}\mathbf{E}\big\vert T_{s}(x,y)-T_{t}(x,y) \rangle\big\vert^2=0.
\end{equation}
Thus we have $T_{\cdot}(x,y) \in
C_{\mathbb{F}}([0,1],L^2(\Omega,\R))$.

\emph{Step 3.}\footnote{The argument in this step is borrowed from \cite{tessitore2013stochastic}.} 
Now we shall prove that, for any $x,y\in H$, there is a predictable modification of the process $T_{\cdot}(x,y)$.

Recalling \eqref{def.Y}, let $Y_{t}^{x,y}(\omega) = Y_{1}^{x,y}(\omega)$ when $t > 1$. Then, for any $x,y\in H$, the mapping $(t,\omega) \mapsto Y_{t}^{x,y}(\omega)$ is $\mathcal{B}([0,\infty))\times\cF$-measurable.

Let $(Y_{t}^{x,y})^{+} = Y_{t}^{x,y} \vee 0$ and $(Y_{t}^{x,y})^{-} = (-Y_{t}^{x,y}) \vee 0$. 
In view of a well-known result (c.f. \cite[Theorem 5.2]{he1992semimartingale}), there a unique predicable projection of $(Y_{t}^{x,y})^{\pm}$, denoted by $\lp{Y}^{x,y,\pm}_{\cdot}$, such that for every predictable time $\sigma$,
$$ \E[({Y}^{x,y}_{\sigma})^{\pm}1_{\{\sigma<\infty\}} \vert \cF_{\sigma-}] 
= \lp{Y}^{x,y,\pm}_{\sigma}1_{\{\sigma<\infty\}}
\quad \textrm{(a.s.).}
$$
With the continuity of filtration $\mathbb{F}$ in mind, we have for every $t\in [0,1]$,
$$ \lp{Y}^{x,y,\pm}_{t}
= 
\E[({Y}^{x,y}_{t})^{\pm}\vert \cF_{t-}]
= \E_t[({Y}^{x,y}_{t})^{\pm}]
\quad \textrm{(a.s.).}
$$
Thus we obtain 
$$ \lp{Y}^{x,y}_{t}
:=  \lp{Y}^{x,y,+}_{t} - \lp{Y}^{x,y,-}_{t}
= \E_t[{Y}^{x,y}_{t}]
= T_t(x,y)
\quad \textrm{(a.s.),}
$$
which implies $\lp{Y}^{x,y}_{\cdot}$ is a predictable version of $T_{\cdot}(x,y)$.

\emph{Step 4.} The construction of process $P_{\cdot}$.

Take a standard
complete orthonormal basis $\{e_{i}\}$ in $H$, and a predictable version of $T_{\cdot}(e_i,e_j)$ for each $i,j\in\mathbb{N}$. Set
\[
\Gamma_{ij}=\{(t,\omega)\in\lbrack0,1] \times\Omega:T_{t}(e_i,e_j)(\omega)\leq
C \sqrt{\Lambda_{t}(\omega)}\},
\]
where the constant $C$ is taken from \eqref{eq:proof:401}. Then $\Gamma_{ij}$
is a predictable set with full measure, and so
$\Gamma:=\cap_{i,j\in\mathbb{N}}\,\Gamma_{ij}$ is also a predictable set with
full measure; moreover, in view of \eqref{eq:proof:401}, the section $\Gamma(t) := \{\omega:(t,\omega)\in \Gamma\}$ is a set of probability $1$ for each $t\in [0,1]$.

Thanks to the Riesz representation theorem, there is a unique
$P_{t}(\omega)\in \mathfrak{B}(H)$ for each $(t,\omega)\in\Gamma$ such that%
\[
\left\langle e_{i},P_{t}(\omega)e_{j}\right\rangle _{H} = T_{t}(e_i,e_j)(\omega)
\]
and $\Vert P_{t}(\omega)\Vert_{\mathfrak{B}(H)}\leq C
\sqrt{\Lambda_{t}(\omega)}$. Let $P_{t}(\omega)=0$ for
$(t,\omega)\in\Gamma^{c}$. Then $\langle e_{i} ,P_{\cdot}e_{j}\rangle$ belongs
to $L^{2}(\Omega \times\lbrack0,1],\mathcal{P},\mathbb{R})$. Since for
each $(t,\omega)$ and $x,y\in H$,
\[
\left\langle x,P_{t}(\omega)y\right\rangle = \lim_{n\to \infty} \left\langle
x_{n},P_{t}(\omega)y_{n}\right\rangle
\]
with $x_{n}=\sum_{i=1}^{n}\langle x,e_{i}\rangle e_{i}$ and $y_{n}%
=\sum_{i=1}^{n}\langle y,e_{i}\rangle e_{i}$,
we have $\langle
x,P_{\cdot}y\rangle\in L^{2}(\Omega\times\lbrack
0,1],\mathcal{P},\mathbb{R})$; moreover, from the bilinearity of $T_t(\cdot,\cdot)$, we know that $\langle
x,P_{t}y\rangle = T_t(x,y)$ a.s. for each $\in [0,1]$ and any $x,y\in H$. Recalling \eqref{con.T}, we have
$${P}_{\cdot}\in
L_{\mathrm{w}}^{2}(\Omega\times\lbrack0,1],\cP,\mathfrak{B}(H))
\cap
C_{\mathbb{F}}([0,1],L_{\mathrm{w}}^{2}(\Omega,\mathfrak{B}(H))).
$$ 

It remains to show the relation
\eqref{eq:relationofPT}. Fix arbitrary $t\in [0,1]$. It
follows from the definition of $T_{t}(\cdot,\cdot)$ that (i) for any $E\in\mathcal{F}_{t}$,
\[
T_{t}(x\mathbf{1}_{E},y\mathbf{1}_{E})=\mathbf{1}_{E} T_{t }(x,y)
\quad\textrm{(a.s.)}\quad
\forall x,y\in H;
\]
and (ii) for any
$E_{1},E_{2}\in\mathcal{F}_{t} $ with $E_{1}\cap E_{2}=\emptyset$,
\[
T_{t}(x\mathbf{1}_{E_{1}},y\mathbf{1}_{E_{2}})=0
\quad\textrm{(a.s.)}\quad
\forall x,y\in H.
\]
This means that, for any simple $H$-valued $\mathcal{F}_{t}$-measurable
random variables $\xi$ and $\zeta$, we have
\[
\langle\xi,{P}_{t}\zeta\rangle= T_{t}%
(\xi,\zeta)\quad\textrm{(a.s.)},
\]
which along with a standard argument of approximation yields the relation
\eqref{eq:relationofPT}.

The uniqueness of the representation is obvious.
The proof of Theorem \ref{thm:repres} is complete.

\subsection{An important property of ${P}_{\cdot}$}

Give the same four-tuple $(A,B;M,N)$ as in Theorem \ref{thm:repres}. Let
$\varepsilon\in(0,1-\tau)$. For each $\xi\in L^{4}(\Omega,\mathcal{F}_{\tau
},H)$, consider the equation
\begin{equation}
z_{\varepsilon}^{\tau,\xi}(t)=\int_{\tau}^{t}Az_{\varepsilon}^{\tau,\xi
}(s)\mathrm{d}s+\int_{\tau}^{t}[Bz_{\varepsilon}^{\tau,\xi}(s)
+\varepsilon^{-\frac{1}{2}}\mathbf{1}_{[\tau,\tau+\varepsilon)}\xi]\,\mathrm{d}
W_{s},\quad t\in[\tau,1] \label{eq:defofzeps}
\end{equation}
and the following bilinear functional on $L^{4}(\Omega,\mathcal{F}_{\tau},H)$
with parameter $\varepsilon$\,:
\[
\lbrack T_{\tau}^{\varepsilon}(A,B;M,N)](\xi,\zeta):=\mathbf{E}\left\langle
z_{\varepsilon}^{\tau,\xi}(1),Mz_{\varepsilon}^{\tau,\zeta}(1)\right\rangle
+\mathbf{E}\int_{\tau}^{1}\left\langle z_{\varepsilon}^{\tau,\xi
}(t),N(t)z_{\varepsilon}^{\tau,\zeta}(t)\right\rangle \,\mathrm{d}t.
\]
In view of Lemma \ref{lem:Lpestimate}, equation \eqref{eq:defofzeps} has a
unique solution $z_{\varepsilon}^{\tau,\xi}(\cdot)$ such that
\[
\mathbf{E}\sup_{t\in\lbrack0,1]}\left\Vert z_{\varepsilon}^{\tau,\xi
}(t)\right\Vert _{H}^{4}\leq C\mathbf{E}\left\Vert \xi\right\Vert _{H}^{4}.
\]
Thus $T_{\tau}^{\varepsilon}=T_{\tau}^{\varepsilon}(A,B;M,N)$ is well-defined.

Next, we shall prove a result concerning the relation between
$T_{\tau}^{\varepsilon}$ and ${P}_{\tau}$, which plays a key role in the
proof of the maximum principle.

\begin{proposition}
\label{prop:propofP} Under the above setting, we have
\begin{equation}\label{eq:pp}
\mathbf{E}\langle \xi,{P}_{\tau}\zeta\rangle =\lim_{\varepsilon
\to 0}[T_{\tau}^{\varepsilon}(A,B;M,N)](\xi,\zeta)
\end{equation}
for each $\tau\in
\lbrack0,1)$ and any $\xi,\zeta\in L^{4}(\Omega,\mathcal{F}_{\tau},H)$.
\end{proposition}

\begin{proof}
First of all, we claim: the assertion holds true if it does in a dense subset
$D_\tau$ of $L^{4}(\Omega,\mathcal{F}_{\tau},H)$, i.e., the relation
\eqref{eq:pp} holds for any $\xi,\zeta\in D_\tau$.

Indeed, from the density, for arbitrary $\eta>0$, we
can find $\xi^{\eta},\zeta^{\eta}\in D_{\tau}$ such that
\[
\mathbf{E}\Vert\xi-\xi^{\eta}\Vert_{H}^{4}+\mathbf{E}\Vert\zeta-\zeta^{\eta}%
\Vert_{H}^{4}<\eta^{4}.
\]
Bearing in mind \eqref{eq:proof:401}, one can show
\[
\left\vert \mathbf{E}\left\langle \xi,{P}_{\tau}\zeta\right\rangle
-\mathbf{E}\left\langle \xi^{\eta},{P}_{\tau}\zeta^{\eta}\right\rangle \right\vert
+\left\vert T_{\tau}^{\varepsilon}(\xi,\zeta)-T_{\tau}^{\varepsilon}(\xi
^{\eta},\zeta^{\eta})\right\vert <C\eta\sqrt{\Lambda_{0}}\big[\mathbf{E}%
(\Vert\xi\Vert_{H}^{4}+\Vert\zeta\Vert_{H}^{4})\big]^{\frac{1}{4}}.
\]
So, if the assertion holds in $D_\tau$, i.e., $\mathbf{E}\langle \xi^{\eta},{P}_{\tau}\zeta
^{\eta}\rangle =\lim_{\varepsilon\to 0}T_{\tau}^{\varepsilon}(\xi
^{\eta},\zeta^{\eta})$,
then
\[
\limsup_{\varepsilon\to  0} \left\vert \mathbf{E}\left\langle
\xi,{P}_{\tau}\zeta\right\rangle -T_{\tau
}^{\varepsilon}(\xi,\zeta)\right\vert \le C\eta\sqrt{\Lambda_{0}}\big[\mathbf{E}%
(\Vert\xi\Vert_{H}^{4}+\Vert\zeta\Vert_{H}^{4})\big]^{\frac{1}{4}}.
\]
From the arbitrariness of $\eta$, we prove the claim.

Now we define
\[
D_{\tau}:=\left\{  \xi\in \mathcal{F}_{\tau
}:\xi\text{ is a simple random variable with values in } V\right\}  .
\]
Obviously, $D_{\tau}$ is dense in $L^{4}(\Omega
,\mathcal{F}_{\tau},H)$.

The next result is the key-point in the proof, which gives a simple asymptotic
alternative of $z_{\varepsilon}^{\tau,\xi}(\tau+\varepsilon)$, independent of
the operators $A$ and $B$.
\begin{lemma}
\label{lem:appofz} Define $\xi_t=\varepsilon^{-\frac{1}{2}}(W_{t}-W_{\tau})\xi$ for $\xi\in
D_{\tau}$. Then
\[
\mathbf{E}\Vert
z_{\varepsilon}^{\tau,\xi}(\tau+\varepsilon)-\xi_{\tau+\varepsilon} \Vert
_{H}^{4}\leq C\varepsilon^{2}\,\mathbf{E}\Vert \xi\Vert_{V}^{4}.
\]
\end{lemma}
\begin{proof}
Denoting $y_{t}=z_{\varepsilon}^{\tau,\xi}(t)-\xi_t$, we can write down the equation
\begin{align*}
\mathrm{d}y_t = \big( A y_t+A\xi_t\big) \,\mathrm{d}t+\big( By_t+B\xi_t\big) \,\mathrm{d}W_{t},\quad y_{\tau}=0
\end{align*}
with $t\in\lbrack\tau,\tau+\varepsilon]$. Inspired by the proof of Lemma
\ref{lem:Lpestimate}, we can deduce
\begin{align*}
\mathbf{E}\left\Vert y_t\right\Vert_{H}^{4} ~\leq~& 2\,\E\int_{\tau}^{t}\left\Vert y_{s}\right\Vert_H^2 \left[
-\kappa\left\Vert y_s\right\Vert_V^2+C(K)\left\Vert y_s\right\Vert_H^2 + \langle y_s,A\xi_s\rangle_{*}
+\Vert B\xi_s\Vert_H^2\right]\d s\\
\leq~& C(\kappa,K)\,\E\int_{\tau}^{t}\left\Vert y_{s}\right\Vert_H^2 \left[\left\Vert y_s\right\Vert_H^2 + \Vert A\xi_s\Vert_{V^*}^2
+\Vert B\xi_s\Vert_H^2\right]\d s
\end{align*}
with $t\in\lbrack\tau,\tau+\varepsilon]$. Note that $\Vert A\xi_s\Vert_{V^*}
+\Vert B\xi_s\Vert_H\le C(K)\Vert \xi_s\Vert_V$. Then by means of the Fubini
theorem, the Gronwall and Young inequalities, we have
\begin{align*}
\sup_{t\in\lbrack\tau,\tau+\varepsilon]}\mathbf{E}\left\Vert y_t\right\Vert_{H}^{4} \,&\leq C\bigg(\int_{\tau}^{\tau+\varepsilon}\E\big(\Vert A\xi_s\Vert_{V^*}^2
+\Vert B\xi_s\Vert_H^2\big)\,\d s\bigg)^2\\
&\leq C\bigg(\int_{\tau}^{\tau+\varepsilon}\E\Vert \xi_s\Vert_{V}^2\,\d s\bigg)^2
= C \left(\frac{\eps}{2}\,\E\left\Vert \xi\right\Vert_{V}^{2}\right)^2.
\end{align*}
This concludes the lemma.
\end{proof}

Let us move on the proof of Proposition \ref{prop:propofP}. For $\xi,\zeta\in D_{\tau}$, define
\[ \xi_t:=\varepsilon^{-\frac{1}{2}}(W_{t}-W_{\tau})\xi \ \
\text{and}\ \ \zeta_t:=\varepsilon^{-\frac{1}{2}}(W_{t}-W_{\tau })\zeta.
\]
Notice the fact that for any $\xi,\zeta\in D_{\tau}$,
\begin{align*}
T_{\tau}^{\varepsilon}(\xi,\zeta)  &  =\mathbf{E}\int_{\tau}^{\tau
+\varepsilon}\left\langle z_{\varepsilon}^{\tau,\xi}(t),N(t)z_{\varepsilon
}^{\tau,\zeta}(t)\right\rangle \,\mathrm{d}t+\mathbf{E}\left\langle
z_{\varepsilon}^{\tau,\xi}(\tau+\varepsilon),{P}_{\tau+\varepsilon
}z_{\varepsilon}^{\tau,\zeta}(\tau+\varepsilon)\right\rangle \\
&  =:I_{1}+I_{2}.
\end{align*}
Now we let $\varepsilon$ tend to $0$. First, one can show, just like
\eqref{eq:proof.301}, the term $I_{1}$ tends to $0$; Next, by means of Lemma
\ref{lem:appofz} and the relation \eqref{eq:proof:401}, we have
\begin{align*}
\mathbf{E}\vert\langle\xi_{\tau+\varepsilon},{P}_{\tau+\varepsilon}
\zeta_{\tau+\varepsilon}\rangle-I_2\vert
~\leq~& C\sqrt{\mathbf{E}\Lambda_{1}}\big( \mathbf{E}\Vert\xi\Vert_{H}
^{4}\big) ^{\frac{1}{4}}\big( \mathbf{E}\Vert z_{\varepsilon}^{\tau,\zeta
}(\tau+\varepsilon)-\zeta_{\tau+\varepsilon}\Vert_{H}^{4}\big) ^{\frac{1}{4}
}\\
&  +C\sqrt{\mathbf{E}\Lambda_{1}}\big( \mathbf{E}\Vert\zeta\Vert_{H}
^{4}\big) ^{\frac{1}{4}}\big( \mathbf{E}\Vert z_{\varepsilon}^{\tau,\xi}
(\tau+\varepsilon)-\xi_{\tau+\varepsilon}\Vert_{H}^{4}\big) ^{\frac{1}{4}}\\
\rightarrow~&  0,\ \ \ \ \text{as\ }\ \varepsilon\to 0.
\end{align*}
Thus we obtain
\begin{equation}\label{lem:approfTeps}
\lim_{\varepsilon\to 0}\big\vert \mathbf{E}\left\langle
\xi_{\tau+\varepsilon},{P}_{\tau+\varepsilon
}\zeta_{\tau+\varepsilon}\right\rangle -T_{\tau}^{\varepsilon}
(\xi,\zeta)\big\vert =0,\quad \forall~\xi,\zeta\in
D_{\tau}.
\end{equation}
On the other hand, for $\xi,\zeta\in D_{\tau}$, we deduce
\begin{align*}
& \big\vert \mathbf{E}\left\langle
\xi_{\tau+\varepsilon},({P}_{\tau+\varepsilon
}-{P}_{\tau})\zeta_{\tau+\varepsilon}\right\rangle \big\vert^2
~=~   \big\vert\mathbf{E}\big[\varepsilon^{-1}\left\vert W_{\tau+\varepsilon
}-W_{\tau}\right\vert ^{2}\left\langle \xi,\left(  {P}_{\tau+\varepsilon
}-{P}_{\tau}\right)  \zeta\right\rangle \big]\big\vert^{2}\\
&\leq~  \big[\mathbf{E}\big(\varepsilon^{-2}\left\vert
W_{\tau+\varepsilon}-W_{\tau }\right\vert
^{4}\big)\big]\cdot\big[\mathbf{E}\left\vert \left\langle \xi,\left(
{P}_{\tau+\varepsilon}-{P}_{\tau}\right)  \zeta\right\rangle \right\vert ^{2}\big]
~\leq~  3\,\mathbf{E}\left\vert \left\langle \xi,\left(
{P}_{\tau+\varepsilon }-{P}_{\tau}\right)  \zeta\right\rangle \right\vert
^{2};
\end{align*}
since $\xi$ and $\zeta$ are simple random variables, recalling
\eqref{eq:proof.303}, we know that the last term in the above relation tends to
$0$ when $\eps\to 0$. Note that $\mathbf{E}\langle
\xi_{\tau+\varepsilon},{P}_{\tau} \zeta_{\tau+\varepsilon}\rangle = \mathbf{E}\langle
\xi,{P}_{\tau} \zeta\rangle $. Hence, we get
\[
\lim_{\varepsilon\to 0}\mathbf{E}\left\langle
\xi_{\tau+\varepsilon},{P}_{\tau+\varepsilon
}\zeta_{\tau+\varepsilon}\right\rangle = \mathbf{E}\left\langle
\xi,{P}_{\tau} \zeta\right\rangle,\quad \forall~\xi,\zeta\in
D_{\tau}.
\]
This along with \eqref{lem:approfTeps} yields
\[
\lim_{\varepsilon\to 0}T_{\tau}^{\varepsilon}
(\xi,\zeta) = \mathbf{E}\left\langle
\xi,{P}_{\tau} \zeta\right\rangle,\quad \forall~\xi,\zeta\in
D_{\tau},
\]
which completes the proof of Proposition \ref{prop:propofP}.
\end{proof}

\section{Proof of Theorem \ref{thm:mp}}

In this section, we shall prove the maximum principle.

\subsection{Second-order expansion of spike variation}

Assume $\bar{x}(\cdot)$ is the state process with respect to an optimal control
$\bar{u}(\cdot)$. We fix a $\tau\in [0,1)$ in this subsection.

Following a classical technique in the optimal control problem, we construct a
perturbed admissible control in the following way (named \emph{spike
variation})
\[
u^{\varepsilon}(t):=\bigg\{
\begin{aligned}
& u(t), && \text{if }t\in\lbrack\tau,\tau+\varepsilon],\\
&\bar{u}(t), && \text{otherwise,}
\end{aligned}
\]
with fixed $\tau\in\lbrack0,1)$, sufficiently small positive $\varepsilon$,
and any given admissible control $u(\cdot)$.

Let $x^{\varepsilon}(\cdot)$ be the state process with respect to the control
$u^{\varepsilon}(\cdot)$. For the sake of convenience, we denote for $\varphi
=f,g,l,f_{x},g_{x},l_{x},f_{xx},g_{xx},l_{xx},$
\begin{align*}
\bar{\varphi}(t)  &  :=\varphi(t,\bar{x}(t),\bar{u}(t)),\\
\varphi^{\Delta}(t)  &  :=\varphi(t,\bar{x}(t),u(t))-\bar
{\varphi}(t),\\
\varphi^{\Delta,\eps}(t)  &  :=\varphi^{\Delta}(t)\cdot \mathbf{1}_{[\tau,\tau+\eps]}(t),\\
\tilde{\varphi}_{xx}^{\varepsilon}(t)  &  := 2 \int_{0}^{1}\lambda\varphi_{xx}\left(
t,\lambda\bar{x}(t)+(1-\lambda)x^{\varepsilon}(t),u^{\varepsilon
}(t)\right)  \mathrm{d}\lambda.
\end{align*}

By means of the basic $L^{p}$ estimate, we have

\begin{lemma}
\label{lem:errorestimate} Under Assumptions \ref{ass:onAB}--\ref{ass:onB}, we
have
\[
\mathbf{E}\sup_{t\in\lbrack0,1]}\left\Vert \Xi^{\eps}(t)\right\Vert _{H}^{2}
:=\mathbf{E}\sup_{t\in\lbrack0,1]}\left\Vert x^{\varepsilon}(t)-\bar
{x}(t)-x_{1}^{\eps}(t)-x_{2}^{\eps}(t)\right\Vert _{H}^{2}=o(\varepsilon^{2}),
\]
where $x_{1}^{\eps}(\cdot)$ and $x_{2}^{\eps}(\cdot)$ are the solutions respectively to%
\begin{align}
x_{1}^{\eps}(t)=  & \int_{0}^{t}[A(s)x_{1}^{\eps}(s)
+\bar{f}_{x}(s)x_{1}^{\eps}(s)]\,\mathrm{d}s   \label{eq:1variation}\\
&
+\int_{0}^{t}[B(s)x_{1}^{\eps}(s)+\bar{g}_{x}(s)x_{1}^{\eps}(s)
+g^{\Delta,\eps}(s)]\mathrm{d}
W_{s},   \nonumber\\
x_{2}^{\eps}(t)=  &
\int_{0}^{t}[A(s)x_{2}^{\eps}(s)+\bar{f}_{x}(s)x_{2}^{\eps}(s)+\frac{1}{2}
\bar{f}_{xx}(s)\left(  x_{1}^{\eps}\otimes x_{1}^{\eps}\right)
(s)+f^{\Delta,\eps
}(s)]\,\mathrm{d}s    \label{eq:2variation}\\
&
+\int_{0}^{t}[B(s)x_{2}^{\eps}(s)+\bar{g}_{x}(s)x_{2}^{\eps}(s)+\frac{1}{2}\bar{g}
_{xx}(s)\left(  x_{1}^{\eps}\otimes x_{1}^{\eps}\right)
(s)+g_{x}^{\Delta,\eps}(s)x_{1}^{\eps}
(s)]\,\mathrm{d}W_{s}.    \nonumber
\end{align}
\end{lemma}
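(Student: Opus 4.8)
\textbf{Proof proposal for Lemma \ref{lem:errorestimate}.}

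The plan is to estimate $\Xi(t):=x^{\varepsilon}(t)-\bar{x}(t)-x_{1}(t)-x_{2}(t)$ by writing down the SEE it solves and applying the $L^{2}$-estimate (Lemma \ref{lem:Lpestimate} with $n=1$), after first collecting the auxiliary bounds $\sup_{t}\mathbf{E}\|x_{1}(t)\|_{H}^{2}=O(\varepsilon)$, $\sup_{t}\mathbf{E}\|x_{1}(t)\|_{H}^{4}=O(\varepsilon^{2})$ and $\sup_{t}\mathbf{E}\|x_{2}(t)\|_{H}^{2}=O(\varepsilon^{2})$. These come directly from Lemma \ref{lem:Lpestimate}: for $x_{1}$, the inhomogeneous terms are $f^{\Delta}(s)$ and $g^{\Delta}(s)$, which vanish off $[\tau,\tau+\varepsilon]$ and are bounded there by $K(1+\|\bar{x}(s)\|_{H}+|u|_{U}+|\bar{u}(s)|_{U})$; hence $\alpha=\int_{\tau}^{\tau+\varepsilon}(\mathbf{E}\|f^{\Delta}\|_{H}^{2n})^{1/2n}\,ds = O(\varepsilon)$ and $\beta=\int_{\tau}^{\tau+\varepsilon}(\mathbf{E}\|g^{\Delta}\|_{H}^{2n})^{1/n}\,ds = O(\varepsilon)$, which gives the stated orders for $n=1,2$ (using Corollary \ref{cor:Lpforstateeq} and $\mathbf{E}|u|_{U}^{4}<\infty$ to control the $\bar{x}$- and $u$-dependent terms). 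Similarly $x_{2}$ has inhomogeneous terms $\tfrac12\bar{f}_{xx}(x_{1}\otimes x_{1})+f_{x}^{\Delta}x_{1}$ and its diffusion analogue; since $\bar{f}_{xx},\bar{g}_{xx}$ are bounded by $K$ and $f_{x}^{\Delta},g_{x}^{\Delta}$ are bounded and supported on $[\tau,\tau+\varepsilon]$, one gets $\alpha=O(\varepsilon^{1/2}\!\cdot\!\varepsilon^{1/2})+O(\varepsilon\cdot\varepsilon^{1/2})=O(\varepsilon)$ hence $\sup_{t}\mathbf{E}\|x_{2}(t)\|_{H}^{2}=O(\varepsilon^{2})$.

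Next I would derive the equation for $\Xi$. Subtracting \eqref{eq:1variation} and \eqref{eq:2variation} from the equation for $x^{\varepsilon}-\bar{x}$ and Taylor-expanding $f,g$ to second order around $\bar{x}$ with integral remainder (using the notation $\tilde\varphi_{xx}^{\varepsilon}$), one finds that $\Xi$ solves a linear SEE of the form
\[
\mathrm{d}\Xi(t)=[A(t)\Xi(t)+\bar{f}_{x}(t)\Xi(t)+a_{\varepsilon}(t)]\,\mathrm{d}t+[B(t)\Xi(t)+\bar{g}_{x}(t)\Xi(t)+b_{\varepsilon}(t)]\,\mathrm{d}W_{t},\quad \Xi(0)=0,
\]
where $a_{\varepsilon}$ collects the discrepancy between the true second-order remainder and the terms put into $x_{1},x_{2}$; schematically
\[
a_{\varepsilon}=\tfrac12\big(\tilde f_{xx}^{\varepsilon}-\bar f_{xx}\big)(x_{1}\otimes x_{1})+\tilde f_{xx}^{\varepsilon}\big(x_{1}\otimes(x_{2}+\Xi)\big)+\tfrac12\tilde f_{xx}^{\varepsilon}\big((x_{2}+\Xi)\otimes(x_{2}+\Xi)\big)+\big(f_{x}^{\Delta}(t)-0\big)x_{2}(t)+\cdots,
\]
plus the analogous $g$-terms in $b_{\varepsilon}$; here I absorb $\bar{f}_{x},\bar{g}_{x}$ into the "$A$"-type part, which still satisfies Assumption \ref{ass:onAB} with a possibly larger $K$, so Lemma \ref{lem:Lpestimate} applies. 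The key point is that each term in $a_{\varepsilon},b_{\varepsilon}$ is $o(\varepsilon)$ in the relevant norm: the term $(\tilde f_{xx}^{\varepsilon}-\bar f_{xx})(x_{1}\otimes x_{1})$ is $o(1)$ times $\|x_{1}\|_{H}^{2}$ and $x^{\varepsilon}-\bar{x}\to0$ in $L^{2}$ forces $\tilde f_{xx}^{\varepsilon}\to\bar f_{xx}$ along a suitable mode of convergence (dominated convergence, using boundedness of $f_{xx}$); the cross terms $x_{1}\otimes x_{2}$ contribute $O(\varepsilon^{1/2}\cdot\varepsilon)=o(\varepsilon)$ by Cauchy--Schwarz; the term $f_{x}^{\Delta}x_{2}$ is supported on $[\tau,\tau+\varepsilon]$ and bounded, giving $\int_{\tau}^{\tau+\varepsilon}(\mathbf{E}\|f_{x}^{\Delta}x_{2}\|_{H}^{2})^{1/2}\,ds=O(\varepsilon)\cdot O(\varepsilon)=o(\varepsilon)$; and the terms involving $\Xi$ itself, such as $\tilde f_{xx}^{\varepsilon}(x_{1}\otimes\Xi)$, are handled by Young's inequality and absorbed into the left-hand side of the final Gronwall step (or treated by a bootstrap, first showing $\sup_t\mathbf{E}\|\Xi(t)\|_H^2=O(\varepsilon^2)$ and then upgrading to $o(\varepsilon^2)$).

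Finally, applying Lemma \ref{lem:Lpestimate} with $n=1$ to the $\Xi$-equation yields
\[
\sup_{t\in[0,1]}\mathbf{E}\|\Xi(t)\|_{H}^{2}\leq C(\kappa,K)\Big(\alpha_{\varepsilon}^{2}+\beta_{\varepsilon}\Big),\qquad \alpha_{\varepsilon}=\int_{0}^{1}\big(\mathbf{E}\|a_{\varepsilon}(s)\|_{H}^{2}\big)^{1/2}ds,\ \ \beta_{\varepsilon}=\int_{0}^{1}\mathbf{E}\|b_{\varepsilon}(s)\|_{H}^{2}\,ds,
\]
and the term-by-term analysis above gives $\alpha_{\varepsilon}=o(\varepsilon)$ and $\beta_{\varepsilon}=o(\varepsilon^{2})$, hence $\sup_{t}\mathbf{E}\|\Xi(t)\|_{H}^{2}=o(\varepsilon^{2})$. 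I expect the main obstacle to be the careful treatment of the remainder term $(\tilde f_{xx}^{\varepsilon}-\bar f_{xx})(x_{1}\otimes x_{1})$ and its $g$-counterpart: establishing that this is genuinely $o(\varepsilon)$ (not merely $O(\varepsilon)$) requires combining $\sup_{t}\mathbf{E}\|x_{1}(t)\|_{H}^{4}=O(\varepsilon^{2})$ with an $L^{2}$-convergence statement for $\tilde f_{xx}^{\varepsilon}-\bar f_{xx}$, which in turn rests on $\sup_{t}\mathbf{E}\|x^{\varepsilon}(t)-\bar{x}(t)\|_{H}^{2}\to0$ together with continuity of $f_{xx}$ in $x$; a uniform-integrability/dominated-convergence argument using the boundedness of $f_{xx}$ makes this rigorous. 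The treatment of the $\Xi$-dependent terms on the right side (requiring either a bootstrap or absorption) is the other point needing care, but is routine once the orders of $x_{1},x_{2}$ are in hand.
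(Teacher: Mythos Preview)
Your strategy is the same as the paper's: establish the auxiliary orders for $x_1$ and $x_2$ via Lemma \ref{lem:Lpestimate}, write down a linear SEE for $\Xi$ with drift/diffusion remainders, and apply the $L^{2}$-estimate to those remainders together with dominated convergence. The only noteworthy difference is in how the remainder is organized. You expand the Taylor error in terms of $x_{1}$, $x_{2}$ and $\Xi$ itself, which forces $\Xi$ to appear quadratically on the right and leads to the absorption/bootstrap issue you flag. The paper sidesteps this entirely by writing the remainders in terms of $x^{\varepsilon}-\bar{x}$ and $x^{\varepsilon}-\bar{x}-x_{1}$ instead:
\[
\alpha^{\varepsilon}=f_{x}^{\Delta}\,(x^{\varepsilon}-\bar{x}-x_{1})+\tfrac12(\tilde f_{xx}^{\varepsilon}-\bar f_{xx})(x^{\varepsilon}-\bar{x})^{\otimes 2}+\tfrac12\bar f_{xx}\big[(x^{\varepsilon}-\bar{x})^{\otimes 2}-x_{1}^{\otimes 2}\big],
\]
and similarly for $\beta^{\varepsilon}$. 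Since $\sup_t\mathbf{E}\|x^{\varepsilon}-\bar{x}\|_{H}^{4}=O(\varepsilon^{2})$ and $\sup_t\mathbf{E}\|x^{\varepsilon}-\bar{x}-x_{1}\|_{H}^{2}=O(\varepsilon^{2})$ follow from Lemma \ref{lem:Lpestimate} just as your bounds on $x_{1},x_{2}$ do, every term in $\alpha^{\varepsilon},\beta^{\varepsilon}$ is already estimated \emph{independently of} $\Xi$, so no circularity arises and the final application of Lemma \ref{lem:Lpestimate} is direct. Your version is not wrong---replacing $x_{2}+\Xi$ by $x^{\varepsilon}-\bar{x}-x_{1}$ makes the two decompositions literally coincide---but the paper's bookkeeping eliminates precisely the obstacle you were worried about.
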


\begin{proof}
The proof is rather standard (cf. \cite{yong1999stochastic}). The
$L^p$-estimate for SEEs plays a key role here. Indeed, by means of Lemma
\ref{lem:Lpestimate}, and keeping in mind Assumption \ref{ass:onfglh} and
Corollary \ref{cor:Lpforstateeq}, we deduce that
\begin{align}
&\E\sup_{t\in[0,1]} \left\Vert x_{1}^{\eps}(t)\right\Vert _{H}^{2p} ~\leq~ C\,\E \left[ \int_{\tau}^{\tau+\eps}\!\left\Vert g^{\Delta}(t) \right\Vert_H^{2}\cdot{\bf 1}_{[\tau,\tau+\eps]}(t)\,\d t \right]^p  \label{eq:inproof_301}\\
&\leq~ C \eps^{p-1}\E \int_{\tau}^{\tau+\eps}\!\left\Vert g^{\Delta}(t) \right\Vert_H^{2p}\d t ~\leq~ C \eps^{p} \sup_{t\in[0,1]}\E \left\Vert g^{\Delta}(t) \right\Vert_H^{2p} \nonumber\\
& \le~ C \eps^{p} \sup_{t\in[0,1]}\E \left(1+\big\vert u(t)\big\vert_U^{2p} + \big\vert \bar{u}(t)\big\vert_U^{2p}\right)  
\nonumber\end{align}
with $p\in[1,2+\frac{1}{2}(\delta_{u} \wedge \delta_{\bar{u}})]$; moreover, by
similar arguments we have the following estimates:
\begin{align}
\mathbf{E}\Big\{ \varepsilon^{-p}\sup_{t}\left\Vert x^{\varepsilon}(t)-\bar
{x}(t)\right\Vert _{H}^{2p} + \varepsilon^{-2}\sup_{t}\left\Vert
x_{2}^{\eps}(t)\right\Vert _{H}
^{2} + \varepsilon^{-2}\sup_{t}\left\Vert x^{\varepsilon}(t)-\bar
{x}(t)-x_{1}^{\eps}(t)\right\Vert _{H}^{2}\Big\}\leq C. \label{eq:inproof_302}
\end{align}
On the other hand, a direct calculation gives%
\begin{align}
\Xi^{\eps}(t)=  &  \int_{0}^{t}\left[
A(s)\Xi^{\eps}(s)+\bar{f}_{x}(s)\Xi^{\eps}(s)+\alpha
^{\varepsilon}(s)\right]  \,\mathrm{d}s \label{proofxi}\\
&  +\int_{0}^{t}\left[
B(s)\Xi^{\eps}(s)+\bar{g}_{x}(s)\Xi^{\eps}(s)+\beta^{\varepsilon }(s)\right]
\,\mathrm{d}W_{s}, \nonumber
\end{align}
where%
\begin{align*}
\alpha^{\varepsilon}(s):=&\,f_{x}^{\Delta,\eps}(s)(x^{\varepsilon}(s)-\bar
{x}(s))+
\frac{1}{2}(\tilde{f}^{\eps}_{xx}(s)-\bar{f}_{xx}(s))(x^{\varepsilon}(s)-\bar
{x}(s))\otimes(x^{\varepsilon}(s)-\bar{x}(s))\\
&  +\frac{1}{2}\bar{f}_{xx}(s)\left[(x^{\varepsilon}(s)-\bar{x}%
(s))\otimes(x^{\varepsilon}(s)-\bar{x}(s))-(x_{1}^{\eps}\otimes
x_{1}^{\eps})(s)\right], \\
\beta^{\varepsilon}(s):= & \, g_{x}^{\Delta,\eps}(s)(x^{\varepsilon}(s)-\bar
{x}(s)-x_{1}^{\eps}(s))+\frac{1}{2}(\tilde{g}^{\eps}_{xx}(s)-\bar{g}_{xx}(s))(x^{\varepsilon}(s)-\bar
{x}(s))\otimes(x^{\varepsilon}(s)-\bar{x}(s))\\
&  +\frac{1}{2}\bar{g}_{xx}(s)\left[  (x^{\varepsilon}(s)-\bar{x}
(s))\otimes(x^{\varepsilon}(s)-\bar{x}(s))-(x_{1}^{\eps}\otimes
x_{1}^{\eps})(s)\right]  .
\end{align*}
Now apply Lemma \ref{lem:Lpestimate} to \eqref{proofxi}. Keeping in mind
\eqref{eq:inproof_301} and \eqref{eq:inproof_302}, and by means of the H\"older
inequality and the Lebesgue dominated convergence theorem, we conclude
\[
\mathbf{E}\sup_{t\in\lbrack0,1]}\left\Vert \Xi^{\eps}(t)\right\Vert _{H}^{2}%
\leq \mathbf{E}\bigg[\int_{0}^{1}\left\Vert \alpha^{\varepsilon
}(s)\right\Vert _{H}\,\mathrm{d}s\bigg]^{2}%
+\mathbf{E}\int_{0}^{1}\left\Vert \beta^{\varepsilon}(s)\right\Vert _{H}%
^{2}\,\mathrm{d}s = o(\varepsilon^{2}).
\]
The lemma is proved.
\end{proof}

With the aid of the above lemma and by the fact
\[
J(u^{\varepsilon}(\cdot))-J(\bar{u}(\cdot))\geq0,
\]
we can prove the following result.

\begin{lemma}
\label{lem:2expansion} Under Assumptions \ref{ass:onAB}--\ref{ass:onB}, we
have%
\begin{align}
\label{eq:variationfor}o(\varepsilon)~\leq~  &  \mathbf{E}\int_{0}%
^{1}\Big[ l^{\Delta,\eps}(t)+\left\langle \bar{l}_{x}(t),x_{1}^{\eps}(t)+x_{2}^{\eps}%
(t)\right\rangle +\frac{1}{2}\left\langle x_{1}^{\eps}(t),\bar{l}_{xx}(t)x_{1}^{\eps}%
(t)\right\rangle \Big] \mathrm{d}t \\
&  +\mathbf{E}\Big[ \left\langle h_{x}(\bar{x}(1)),x_{1}^{\eps}(1)+x_{2}^{\eps}%
(1)\right\rangle +\frac{1}{2}\left\langle x_{1}^{\eps}(1),h_{xx}(\bar{x}%
(1))x_{1}^{\eps}(1)\right\rangle \Big] .  \nonumber
\end{align}

\end{lemma}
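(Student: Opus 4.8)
The plan is to derive \eqref{eq:variationfor} by expanding the cost functional to second order in $\varepsilon$ and using the optimality inequality $J(u^\varepsilon)-J(\bar u)\ge 0$. First I would write
\[
J(u^\varepsilon)-J(\bar u)=\mathbf{E}\int_0^1\big[l(t,x^\varepsilon(t),u^\varepsilon(t))-l(t,\bar x(t),\bar u(t))\big]\,\mathrm{d}t+\mathbf{E}\big[h(x^\varepsilon(1))-h(\bar x(1))\big].
\]
For the running-cost integrand I would split the difference as
\[
l(t,x^\varepsilon,u^\varepsilon)-l(t,\bar x,\bar u)=\big[l(t,x^\varepsilon,u^\varepsilon)-l(t,\bar x,u^\varepsilon)\big]+l^\Delta(t),
\]
apply the second-order Taylor expansion of $l$ in $x$ (using the integral form of the remainder together with Assumption \ref{ass:onfglh}) to the first bracket, and then substitute $x^\varepsilon(t)-\bar x(t)=x_1(t)+x_2(t)+\Xi(t)$ from Lemma \ref{lem:errorestimate}. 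The terminal term $h(x^\varepsilon(1))-h(\bar x(1))$ is handled the same way but with no control-perturbation piece since $h$ does not depend on $u$.

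Next I would collect terms by their order in $\varepsilon$. Recall from \eqref{eq:inproof_301} that $\mathbf{E}\|x_1(t)\|_H^2=O(\varepsilon)$, $\mathbf{E}\|x_2(t)\|_H^2=O(\varepsilon^2)$, and $\mathbf{E}\|x_1(t)\|_H^4=O(\varepsilon^2)$, and from Lemma \ref{lem:errorestimate} that $\mathbf{E}\|\Xi(t)\|_H^2=o(\varepsilon^2)$. The zeroth-order contribution vanishes; the surviving first- and second-order terms are exactly
\[
\mathbf{E}\int_0^1\Big[l^\Delta(t)+\langle\bar l_x(t),x_1(t)+x_2(t)\rangle+\tfrac12\langle x_1(t),\bar l_{xx}(t)x_1(t)\rangle\Big]\mathrm{d}t+\mathbf{E}\Big[\langle h_x(\bar x(1)),x_1(1)+x_2(1)\rangle+\tfrac12\langle x_1(1),h_{xx}(\bar x(1))x_1(1)\rangle\Big],
\]
while all remaining terms are $o(\varepsilon)$. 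The terms one must verify are $o(\varepsilon)$ include: the first-order term $\langle\bar l_x(t),\Xi(t)\rangle$ (bounded by $(\mathbf{E}\|\bar l_x(t)\|_H^2)^{1/2}(\mathbf{E}\|\Xi(t)\|_H^2)^{1/2}$, and $\bar l_x$ is square-integrable by Corollary \ref{cor:Lpforstateeq}); the cross terms $\langle\bar l_x,x_1\rangle$-type products are kept, but $\langle\bar l_x,\Xi\rangle$, second-order cross terms $\langle x_1,\bar l_{xx}(x_2+\Xi)\rangle$, $\langle x_1+x_2+\Xi,\bar l_{xx}\Xi\rangle$, etc., are all $o(\varepsilon)$ by Cauchy--Schwarz and the estimates above; and the error in replacing $\tilde l_{xx}^\varepsilon$ by $\bar l_{xx}$ in the quadratic term, which uses the continuity of $l_{xx}$ in $x$ together with $\sup_t\mathbf{E}\|x^\varepsilon(t)-\bar x(t)\|_H^2=O(\varepsilon)$ and dominated convergence. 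Finally, dividing the optimality inequality $0\le J(u^\varepsilon)-J(\bar u)$ by $1$ (not $\varepsilon$) and moving the $o(\varepsilon)$ terms to the left gives \eqref{eq:variationfor}.

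The main obstacle is the careful bookkeeping of which mixed terms survive and which are absorbed into $o(\varepsilon)$, together with justifying the replacement of the interpolated Hessian $\tilde\varphi_{xx}^\varepsilon$ by $\bar\varphi_{xx}$ in the quadratic terms; this is where Assumption \ref{ass:onfglh} (boundedness of $l_{xx},h_{xx}$ and their continuity in $x$), the $L^p$-estimates of Lemma \ref{lem:Lpestimate}, Corollary \ref{cor:Lpforstateeq}, and the estimates \eqref{eq:inproof_301}--\eqref{eq:inproof_302} of Lemma \ref{lem:errorestimate} all get used. The rest is routine Cauchy--Schwarz and Lebesgue dominated convergence, so I would merely indicate the pattern once and state that the remaining terms are handled identically, as is standard (cf.\ \cite{yong1999stochastic}).
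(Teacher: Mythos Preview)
Your proposal is correct and follows essentially the same route as the paper: expand $J(u^\varepsilon)-J(\bar u)$ to second order in $x$ via Taylor with integral remainder, substitute $x^\varepsilon-\bar x=x_1+x_2+\Xi$, and show all leftover terms form an $o(\varepsilon)$ remainder using the estimates \eqref{eq:inproof_301}--\eqref{eq:inproof_302}, Cauchy--Schwarz, and dominated convergence. The paper in fact only sketches this, collecting the residual terms into a single quantity $\gamma(\varepsilon)$ and declaring $|\gamma(\varepsilon)|=o(\varepsilon)$ by the same tools you cite; your more explicit bookkeeping (including the $l_x^\Delta$-type cross term supported on $[\tau,\tau+\varepsilon]$, which the paper's sketch omits but which is easily $O(\varepsilon^{3/2})$) is a faithful unpacking of that sketch.
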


\begin{proof}
The proof is also standard (cf. \cite{yong1999stochastic}), we give a
sketch here. A direct calculation shows that
\begin{align*}
0~\leq~  &  J(u^{\varepsilon}(\cdot))-J(\bar{u}(\cdot))\\
=~  &  \mathbf{E}\int_{0}^{1}\Big[l^{\Delta,\eps}(t)+\left\langle \bar{l}%
_{x}(t),x_{1}^{\eps}(1)+x_{2}^{\eps}(1)\right\rangle +\frac{1}{2}\left\langle x_{1}^{\eps}
(t),\bar{l}_{xx}(t)x_{1}^{\eps}(t)\right\rangle \Big]\,\mathrm{d}t\\
&  +\mathbf{E}\Big[\left\langle h_{x}(\bar{x}(1)),x_{1}^{\eps}(1)+x_{2}^{\eps}%
(1)\right\rangle +\frac{1}{2}\left\langle x_{1}^{\eps}(1),h_{xx}(\bar{x}%
(1))x_{1}^{\eps}(1)\right\rangle \Big]+\gamma(\varepsilon),
\end{align*}
where%
\begin{align*}
\gamma(\varepsilon) ~:=~  &  \mathbf{E}\left\langle h_{x}(\bar{x}%
(1)),\Xi^{\eps}(1)\right\rangle +\mathbf{E}\int_{0}^{1}\left\langle \bar{l}%
_{x}(t),\Xi^{\eps}(t)\right\rangle \,\mathrm{d}t\\
&  +\frac{1}{2}\mathbf{E}\left\langle \left[  \tilde{h}_{xx}^{\varepsilon
}-h_{xx}\left(  \bar{x}(1)\right)  \right]  (x^{\varepsilon}(1)-\bar
{x}(1)),x^{\varepsilon}(1)-\bar{x}(1)\right\rangle \\
&  +\frac{1}{2}\mathbf{E}\left\langle h_{xx}\left(  \bar{x}(1)\right)
(x^{\varepsilon}(1)-\bar{x}(1)),x^{\varepsilon}(1)-\bar{x}(1)-x_{1}^{\eps}%
(1)\right\rangle \\
&  +\frac{1}{2}\mathbf{E}\left\langle h_{xx}\left(  \bar{x}(1)\right)
(x^{\varepsilon}(1)-\bar{x}(1)-x_{1}^{\eps}(1)),x_{1}^{\eps}(1)\right\rangle\\
&  + \E\int_0^1\left\langle l^{\Delta,\eps}_x(t),x^{\eps}(t)-\bar{x}(t)\right\rangle\d t\\
&  +\frac{1}{2}\mathbf{E}\int_{0}^{1}\left\langle \left[  \tilde{l}%
_{xx}^{\varepsilon}(t)-\bar{l}_{xx}(t)\right]  (x^{\varepsilon}(t)-\bar
{x}(t)),x^{\varepsilon}(t)-\bar{x}(t)\right\rangle \,\mathrm{d}t\\
&  +\frac{1}{2}\mathbf{E}\int_{0}^{1}\left\langle \bar{l}_{xx}%
(t)(x^{\varepsilon}(t)-\bar{x}(t)),x^{\varepsilon}(t)-\bar{x}(t)-x_{1}^{\eps}%
(t)\right\rangle \,\mathrm{d}t\\
&  +\frac{1}{2}\mathbf{E}\int_{0}^{1}\left\langle \bar{l}_{xx}%
(t)(x^{\varepsilon}(t)-\bar{x}(t)-x_{1}^{\eps}(t)),x_{1}^{\eps}(t)\right\rangle
\,\mathrm{d}t
\end{align*}
with
\[
\tilde{h}_{xx}^{\varepsilon}:= 2 \int_{0}^{1}\lambda h_{xx}\left(\lambda \bar{x}(1)+(1-\lambda)
x^{\varepsilon}(1)\right)  \mathrm{d}\lambda.
\]

Consequently, by virtue of \eqref{eq:inproof_301}, \eqref{eq:inproof_302} and
the Lebesgue dominated convergence theorem, we can deduce $\vert
\gamma(\varepsilon)\vert = o(\varepsilon)$, which implies the lemma.
\end{proof}

\subsection{First-order duality analysis}

We need do some duality analysis in order to get the maximum condition
\eqref{eq:mpcondition} by sending $\varepsilon$ to $0$ in
inequality \eqref{eq:variationfor}. In this subsection, we still fix the $\tau\in[0,1)$.
Recall the Hamiltonian
\[
\mathcal{H}(t,x,u,p,q)=l(t,x,u)+\left\langle p,f(t,x,u)\right\rangle
+\left\langle q,g(t,x,u)\right\rangle ,
\]
and BSEE \eqref{eq:1adjoint}. Under Assumptions \ref{ass:onAB} and
\ref{ass:onfglh}, it follows from Du-Meng \cite[Propostion 3.2]{du2010revisit}
that equation \eqref{eq:1adjoint} has a unique {weak solution} $\left(
p(\cdot),q(\cdot)\right)  $ with the estimate
\begin{equation}
\mathbf{E}\sup_{t\in[0,1]}\left\Vert p(t)\right\Vert _{H}^{2}%
+\mathbf{E}\int_{0}^{1}\left\Vert q(t)\right\Vert _{H}^{2}\,\mathrm{d}t\leq
C(\kappa,K)\sup_{t\in[0,1]}\E \big(1+\left\vert \bar
{u}(t)\right\vert _{U}^{2}\big). \label{eq:estforajoint}%
\end{equation}
Thus the assertion (i) of Theorem \ref{thm:mp} holds true. Furthermore, from
Lemma \ref{lem:2expansion} we have

\begin{lemma}
\label{cor:2expansion} Under Assumptions \ref{ass:onAB}--\ref{ass:onB}, we
have%
\begin{align}
o(1)~\leq~  &  \varepsilon^{-1}\mathbf{E}\int_{\tau}^{\tau+\varepsilon}\left[
\mathcal{H}(t,\bar{x}(t),u(t),p(t),q(t))-\mathcal{H}(t,\bar{x}(t),\bar
{u}(t),p(t),q(t))\right]  \,\mathrm{d}t \label{eq:varitionineq}\\
&  +\frac{1}{2}\varepsilon^{-1}\mathbf{E}\int_{0}^{1}\left\langle
x_{1}^{\eps}(t),\mathcal{H}_{xx}(t,\bar{x}(t),\bar{u}(t),p(t),q(t))x_{1}^{\eps}%
(t)\right\rangle \,\mathrm{d}t\nonumber\\
&  +\frac{1}{2}\varepsilon^{-1}\mathbf{E}\left\langle
x_{1}^{\eps}(1),h_{xx}(\bar {x}(1))x_{1}^{\eps}(1)\right\rangle ,  \nonumber
\end{align}
where $(p(\cdot),q(\cdot))$ is the solution to equation \eqref{eq:1adjoint}.
\end{lemma}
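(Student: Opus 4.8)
The plan is to derive inequality \eqref{eq:varitionineq} from \eqref{eq:variationfor} by the standard first-order duality argument, i.e.\ by pairing the first-variation process $x_1$ (and $x_2$) with the first-order adjoint process $(p,q)$ via the It\^o formula. First I would apply the It\^o product rule to $\langle p(t),x_1(t)+x_2(t)\rangle$ on $[0,1]$. Using the dynamics \eqref{eq:1variation}--\eqref{eq:2variation} for $x_1,x_2$ and the backward equation \eqref{eq:1adjoint} for $p$, the terms involving the operators $A,A^{\ast}$ and $B,B^{\ast}$ cancel in duality (here one uses Assumption \ref{ass:onAB} to justify the integrability of the duality pairings $\langle \cdot,A(\cdot)\cdot\rangle_{\ast}$ and the fact that $x_1,x_2$ take values in $V$ in the variational sense), and the terms $\bar f_x x_i$, $\bar g_x x_i$ pair against the $\mathcal{H}_x$-term in the drift of $p$. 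After taking expectations (the stochastic integrals are true martingales by the $L^p$-estimates of Lemma~\ref{lem:Lpestimate} and Corollary~\ref{cor:Lpforstateeq}, together with \eqref{eq:estforajoint}), this yields an expression for $\mathbf{E}\langle h_x(\bar x(1)),x_1(1)+x_2(1)\rangle$ in terms of $\mathbf{E}\int_0^1$ of pairings of $p$ and $q$ with the inhomogeneous terms $f^{\Delta}$, $g^{\Delta}$, $\tfrac12\bar f_{xx}(x_1\otimes x_1)$, $f_x^{\Delta}x_1$, $\tfrac12\bar g_{xx}(x_1\otimes x_1)$, $g_x^{\Delta}x_1$, plus the $\langle \mathcal{H}_x, x_1+x_2\rangle$ term.

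Next I would substitute this identity into \eqref{eq:variationfor}. The $\langle \bar l_x, x_1+x_2\rangle$ term in \eqref{eq:variationfor} combines with the $\langle \mathcal{H}_x,x_1+x_2\rangle$ contribution, and by the definition \eqref{eq:hamiltonian} of $\mathcal{H}$ one sees that $\bar l_x + \bar f_x^{\ast}p + \bar g_x^{\ast}q = \mathcal{H}_x(t,\bar x,\bar u,p,q)$, so that the first-order-in-$x_1,x_2$ terms reorganize exactly into the $\mathcal{H}_{xx}$ second-order term of \eqref{eq:varitionineq}: indeed $l_{xx} + \langle p,f_{xx}\rangle + \langle q,g_{xx}\rangle = \mathcal{H}_{xx}$. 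The remaining terms are $\mathbf{E}\int_0^1 [\,l^{\Delta}(t)+\langle p(t),f^{\Delta}(t)\rangle+\langle q(t),g^{\Delta}(t)\,]\,\mathrm dt$, which, since $f^{\Delta},g^{\Delta},l^{\Delta}$ vanish outside $[\tau,\tau+\varepsilon]$, is precisely $\mathbf{E}\int_{\tau}^{\tau+\varepsilon}[\mathcal{H}(t,\bar x(t),u,p(t),q(t))-\mathcal{H}(t,\bar x(t),\bar u(t),p(t),q(t))]\,\mathrm dt$. Finally, dividing the whole inequality by $\varepsilon$ converts the $o(\varepsilon)$ on the left of \eqref{eq:variationfor} into $o(1)$ and produces exactly \eqref{eq:varitionineq}.

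The main obstacle I expect is making the It\^o-formula / duality step rigorous in the \emph{weak} (variational) infinite-dimensional setting: the processes $x_1,x_2$ only take values in $V$ in the sense of the variational solution, $p$ is a weak solution of a BSEE with an $A^{\ast}$-term, and the pairing $\langle p, A x_i\rangle$ must be interpreted via the $V$--$V^{\ast}$ duality $\langle\cdot,\cdot\rangle_{\ast}$. One must invoke the appropriate It\^o formula for the product of a forward and a backward variational solution (as in the cited references \cite{krylov1981stochastic, du2010revisit}), checking that all the cross terms are integrable using the $L^2$ and $L^4$ bounds already established; the cross-term $\langle q(t), \bar g_x(t) x_1(t)\rangle$ and the $\langle B^{\ast}q, x_i\rangle$ versus $\langle q,Bx_i\rangle$ cancellation need care because $B$ is only bounded from $V$ to $H$. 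Once this functional-analytic bookkeeping is in place, the rest is the routine substitution and regrouping described above; since the paper says the argument is standard and refers to \cite{yong1999stochastic}, I would present it compactly, flagging the infinite-dimensional It\^o formula as the one nontrivial input.
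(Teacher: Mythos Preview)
Your proposal is correct and follows the same route as the paper: invoke the forward--backward duality (It\^o formula on $\langle p, x_1+x_2\rangle$) to rewrite the linear-in-$(x_1+x_2)$ terms of \eqref{eq:variationfor} as pairings of $(p,q)$ with the inhomogeneities of \eqref{eq:1variation}--\eqref{eq:2variation}, regroup $l^{\Delta}+\langle p,f^{\Delta}\rangle+\langle q,g^{\Delta}\rangle$ and $\bar l_{xx}+\langle p,\bar f_{xx}\,\cdot\rangle+\langle q,\bar g_{xx}\,\cdot\rangle$ into the two Hamiltonian expressions, and divide by~$\varepsilon$; your caveat about the variational It\^o formula is exactly the nontrivial input the paper also cites. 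The one detail you skipped in the bookkeeping is that the cross terms $\mathbf{E}\int_{\tau}^{\tau+\varepsilon}\langle p,f_x^{\Delta} x_1\rangle\,\mathrm dt$ and $\mathbf{E}\int_{\tau}^{\tau+\varepsilon}\langle q,g_x^{\Delta} x_1\rangle\,\mathrm dt$ also survive the duality and are not among your ``remaining terms''; the paper disposes of them as $o(\varepsilon)$ using \eqref{eq:inproof_301} and \eqref{eq:estforajoint} (they are $O(\varepsilon^{3/2})$), which is the one missing line in your accounting.
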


\begin{proof}
In view of the dual relation between the SEE and BSEE (or the It\^o formula), and
by \eqref{eq:inproof_301} and \eqref{eq:estforajoint}, we have
\begin{align*}
\mathbf{E}\int_{0}^{1}  &  \left[  \left\langle \bar{l}_{x}(t),x_{1}^{\eps}
(t)+x_{2}^{\eps}(t)\right\rangle \right]  \,\mathrm{d}t+\mathbf{E}\left\langle
h_{x}(\bar{x}(1)),x_{1}^{\eps}(1)+x_{2}^{\eps}(1)\right\rangle \\
=~  &  \mathbf{E}\int_{0}^{1}\Big[\Big\langle p(t),f^{\Delta,\eps}(t)+\frac{1}
{2}\bar{f}_{xx}(t)\left(  x_{1}^{\eps}\otimes x_{1}^{\eps}\right)
(t)\Big\rangle\Big]\,\mathrm{d}t\\
&  +\mathbf{E}\int_{0}^{1}\Big[\Big\langle q(t),g^{\Delta,\eps}(t)+\frac{1}{2}
\bar{g}_{xx}(t)\left(  x_{1}^{\eps}\otimes x_{1}^{\eps}\right)
(t)+g_{x}^{\Delta,\eps}
(t)x_{1}^{\eps}(t)\Big\rangle\Big]\,\mathrm{d}t\\
=~  &  o(\varepsilon)+\mathbf{E}\int_{0}^{1}\left[
\left\langle p(t),f^{\Delta,\eps}(t)\right\rangle +\left\langle
q(t),g^{\Delta,\eps
}(t)\right\rangle \right]  \,\mathrm{d}t\\
&  +\frac{1}{2}\mathbf{E}\int_{0}^{1}\left[  \left\langle p(t),\bar{f}
_{xx}(t)\left(  x_{1}^{\eps}\otimes x_{1}^{\eps}\right)  (t)\right\rangle
+\left\langle q(t),\bar{g}_{xx}(t)\left(  x_{1}^{\eps}\otimes
x_{1}^{\eps}\right) (t)\right\rangle \right]  \,\mathrm{d}t,
\end{align*}
this along with Lemma \ref{lem:2expansion} yields
\begin{align*}
o(1)~\leq~  &  \varepsilon^{-1}\mathbf{E}\int_{0}^{1}\left[  \mathcal{H}%
(t,\bar{x}(t),u^{\varepsilon}(t),p(t),q(t))-\mathcal{H}(t,\bar{x}(t),\bar
{u}(t),p(t),q(t))\right]  \,\mathrm{d}t\\
&  +\frac{1}{2}\varepsilon^{-1}\mathbf{E}\int_{0}^{1}\left\langle
x_{1}^{\eps}(t),\mathcal{H}_{xx}(t,\bar{x}(t),\bar{u}(t),p(t),q(t))x_{1}^{\eps}%
(t)\right\rangle \,\mathrm{d}t\\
&  +\frac{1}{2}\varepsilon^{-1}\mathbf{E}\left\langle
x_{1}^{\eps}(1),h_{xx}(\bar {x}(1))x_{1}^{\eps}(1)\right\rangle .
\end{align*}
Recalling the definition of $u^{\varepsilon}(\cdot)$, we conclude the lemma.
\end{proof}

\subsection{Second-order duality analysis and completion of the proof}

In this subsection, we deal with the second-order expansion part, i.e., the
second and third terms on the right-hand side of inequality
\eqref{eq:varitionineq}, and complete the proof of Theorem \ref{thm:mp}.

Recall the four-tuple $(\tilde{A},\tilde{B};\tilde{M},\tilde{N})$ with
\begin{align*}
&  \tilde{A}(t) :=A(t)+\bar{f}_{x}(t),~~\tilde{B}(t):=B(t)+\bar{g}_{x}(t),\\
&  \tilde{M} :=h_{xx}(\bar{x}(1)),~~\tilde{N}(t):=\mathcal{H}_{xx}(t,\bar
{x}(t),\bar{u}(t),p(t),q(t)).
\end{align*}
Bearing in mind Assumptions \ref{ass:onAB}--\ref{ass:onB} and the estimate
\eqref{eq:estforajoint}, we can easily obtain that the four-tuple $(\tilde
{A},\tilde{B};\tilde{M},\tilde{N})$ is ``appropriate'', and then from Theorem
\ref{thm:repres} there exists a unique representation 
$$P_{\cdot}\in
L_{\mathrm{w}}^{2}(\Omega\times\lbrack0,1],\cP,\mathfrak{B}(H))
\cap 
C_{\mathbb{F}}([0,1],L_{\mathrm{w}}^{2}(\Omega,\mathfrak{B}(H)))$$ 
with respect
to $T_{\cdot}(\tilde{A} ,\tilde{B};\tilde {M},\tilde{N})$. Therefore, the
assertion (ii) of Theorem \ref{thm:mp} is proved.

From now on we shall let $\tau$ be variable, and, to be more clarified, write
$x^{\tau,\eps}_1=x^{\eps}_1$ and $g^{\Delta,\tau,\eps}=g^{\Delta,\eps}$ to
indicate the dependence of $\tau$.

Now we fix a predictable version $\widetilde{g}^{\Delta}(\cdot)$ of
$g^{\Delta}(\cdot)$ such that
$\mathbf{E}\Vert\widetilde{g}^{\Delta}(\tau)\Vert_{H}^4 < \infty$ for each
$\tau\in [0,1]$. We introduce the following equation
\[
z^{\tau,\varepsilon}(t)=\int_{\tau}^{t}\tilde{A}(s)z^{\tau,\varepsilon}(s)\,\mathrm{d}%
s+\int_{\tau}^{t}[\tilde{B}(s)z^{\tau,\varepsilon}(s)+\varepsilon^{-\frac{1}{2}%
}\widetilde{g}^{\Delta}(\tau){\bf
1}_{[\tau,\tau+\varepsilon]}(s)]\,\mathrm{d}W_{s},\quad t\in[\tau,1].
\]
Then we have

\begin{lemma}
\label{lem:approfx1} For a.e. $\tau\in\lbrack0,1),$%
\[
\lim_{\varepsilon\downarrow0}\mathbf{E}\sup_{t\in\lbrack\tau,1]} \big\Vert
\varepsilon^{-\frac{1}{2}}x^{\tau,\eps}_1(t)-z^{\tau,\varepsilon}(t)\big\Vert_{H}^{4}=0.\
\]
\end{lemma}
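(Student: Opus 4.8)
The plan is to view $d_{\varepsilon}(\cdot):=x^{(2)}(\cdot)-z^{\varepsilon}(\cdot)$, restricted to $[\tau,1]$, as the solution of a linear SEE of the type \eqref{eq:linearSEE} with vanishing initial value, no drift forcing, and a diffusion forcing that is small in the $L^{4}$‑sense, and then to apply the $L^{p}$‑estimate of Lemma \ref{lem:Lpestimate}. First I would note that $z^{\varepsilon}(\tau)=0=x^{(2)}(\tau)$, and that for $t\in[\tau,1]$ the process $d_{\varepsilon}$ satisfies
\[
d_{\varepsilon}(t)=\int_{\tau}^{t}\tilde{A}(s)d_{\varepsilon}(s)\,\mathrm{d}s+\int_{\tau}^{t}\Big[\tilde{B}(s)d_{\varepsilon}(s)+\varepsilon^{-\frac12}1_{[\tau,\tau+\varepsilon)}(s)\big(g^{\Delta}(s)-g^{\Delta}(\tau)\big)\Big]\,\mathrm{d}W_{s},
\]
because for $s\in[\tau,\tau+\varepsilon)$ one has $u^{\varepsilon}(s)=u$, whence $g^{\Delta}(s)=g(s,\bar{x}(s),u)-g(s,\bar{x}(s),\bar{u}(s))$, while $g^{\Delta}(s)=0$ elsewhere. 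Since $(\tilde{A},\tilde{B},\tilde{M},\tilde{N})$ is ``appropriate'', the operators $\tilde{A},\tilde{B}$ satisfy Assumptions \ref{ass:onAB}--\ref{ass:onB}, so Lemma \ref{lem:Lpestimate} with $n=2$ applies to $d_{\varepsilon}$ and gives
\[
\sup_{t\in[\tau,1]}\mathbf{E}\Vert d_{\varepsilon}(t)\Vert_{H}^{4}\leq C(\kappa,K)\,\beta_{\varepsilon}^{2},\qquad \beta_{\varepsilon}:=\varepsilon^{-1}\int_{\tau}^{\tau+\varepsilon}\Big(\mathbf{E}\Vert g^{\Delta}(s)-g^{\Delta}(\tau)\Vert_{H}^{4}\Big)^{\frac12}\,\mathrm{d}s,
\]
and, by the Cauchy--Schwarz inequality, $\beta_{\varepsilon}^{2}\leq \varepsilon^{-1}\int_{\tau}^{\tau+\varepsilon}\mathbf{E}\Vert g^{\Delta}(s)-g^{\Delta}(\tau)\Vert_{H}^{4}\,\mathrm{d}s$.

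It therefore remains to prove that $\varepsilon^{-1}\int_{\tau}^{\tau+\varepsilon}\mathbf{E}\Vert\Psi(s)-\Psi(\tau)\Vert_{H}^{4}\,\mathrm{d}s\to 0$ as $\varepsilon\downarrow0$ for a.e.\ $\tau$, where $\Psi(s):=g(s,\bar{x}(s),u)-g(s,\bar{x}(s),\bar{u}(s))$ is a fixed process (independent of $\tau$ and $\varepsilon$) that coincides with $g^{\Delta}$ on $[\tau,\tau+\varepsilon]$. By Assumption \ref{ass:onfglh}, Corollary \ref{cor:Lpforstateeq}, the admissibility of $\bar{u}$, and the hypothesis $\mathbf{E}|u|_{U}^{4}<\infty$, one has $\sup_{s\in[0,1]}\mathbf{E}\Vert\Psi(s)\Vert_{H}^{4}<\infty$. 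Moreover $\Psi$ is product‑measurable in $(s,\omega)$ and $L^{4}(\Omega,\mathcal{F}_{1},H)$ is separable (the filtration being generated by $W$ is countably generated modulo null sets), so $s\mapsto\Psi(s)$ is a bounded, strongly measurable — hence Bochner integrable — map from $[0,1]$ into $L^{4}(\Omega,\mathcal{F}_{1},H)$. By the Lebesgue differentiation theorem for Banach‑space‑valued functions, a.e.\ $\tau\in[0,1)$ is a Lebesgue point of this map, i.e.\ $\varepsilon^{-1}\int_{\tau}^{\tau+\varepsilon}\Vert\Psi(s)-\Psi(\tau)\Vert_{L^{4}(\Omega;H)}\,\mathrm{d}s\to0$. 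Using the uniform bound $\Vert\Psi(s)-\Psi(\tau)\Vert_{L^{4}(\Omega;H)}\leq 2R$ with $R:=\sup_{s}\Vert\Psi(s)\Vert_{L^{4}(\Omega;H)}<\infty$ to pass from the first power to the fourth, one gets $\varepsilon^{-1}\int_{\tau}^{\tau+\varepsilon}\mathbf{E}\Vert\Psi(s)-\Psi(\tau)\Vert_{H}^{4}\,\mathrm{d}s\leq(2R)^{3}\,\varepsilon^{-1}\int_{\tau}^{\tau+\varepsilon}\Vert\Psi(s)-\Psi(\tau)\Vert_{L^{4}(\Omega;H)}\,\mathrm{d}s\to0$. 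Combining this with the two displays of the previous paragraph finishes the proof.

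I expect the only delicate point to be this last, vector‑valued Lebesgue differentiation step: one must recognize that the correct object is the $L^{4}(\Omega;H)$‑valued function $s\mapsto\Psi(s)$ rather than a scalar function (the ``$\Psi(\tau)$'' inside the norm is itself a random variable), check its strong measurability and boundedness, and then convert the Lebesgue point for the $L^{4}$‑norm into the statement about fourth moments via the uniform $L^{4}$‑bound. Everything else — identifying the forcing term of the difference equation $d_{\varepsilon}=x^{(2)}-z^{\varepsilon}$, verifying that $\tilde{A},\tilde{B}$ are admissible for Lemma \ref{lem:Lpestimate}, and the Cauchy--Schwarz reduction — is routine.
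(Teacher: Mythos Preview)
Your proof is correct and follows the same route as the paper: identify $d_{\varepsilon}=x^{(2)}-z^{\varepsilon}$ as the solution of a linear SEE with zero initial data and diffusion forcing $\varepsilon^{-1/2}1_{[\tau,\tau+\varepsilon)}(g^{\Delta}(s)-g^{\Delta}(\tau))$, apply the $L^{4}$-estimate (Lemma \ref{lem:Lpestimate}) to reduce to showing $\varepsilon^{-1}\int_{\tau}^{\tau+\varepsilon}\mathbf{E}\Vert g^{\Delta}(s)-g^{\Delta}(\tau)\Vert_{H}^{4}\,\mathrm{d}s\to 0$, and settle the latter by a Lebesgue differentiation argument resting on the separability of $L^{4}(\Omega,\mathcal{F}_{1},H)$. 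The only minor difference is in the packaging of this last step: the paper applies the \emph{scalar} Lebesgue differentiation theorem to $t\mapsto\mathbf{E}\Vert\Psi(t)-X\Vert_{H}^{4}$ for $X$ ranging over a countable dense subset of $L^{4}(\Omega;H)$ and then uses a triangle-inequality density argument, whereas you invoke the \emph{vector-valued} Lebesgue differentiation theorem directly on $s\mapsto\Psi(s)\in L^{4}(\Omega;H)$ and convert to fourth moments via the uniform bound---these are equivalent formulations of the same idea.
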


\begin{proof}
Recall that
$g^{\Delta,\tau,\eps}(t)=g^{\Delta,\eps}(t)=g^{\Delta}(t)\cdot\mathbf{1}_{[\tau,\tau+\eps]}(t)$.
In view of Lemma \ref{lem:Lpestimate}, the H\"older inequality and the Fubini
theorem, we have that, for each $\tau\in\lbrack0,1)$,
\begin{align*}
\mathbf{E}\sup_{t\in\lbrack\tau,1]}\big\Vert
\varepsilon^{-\frac{1}{2}}x^{\tau,\eps}_1(t)-z^{\tau,\varepsilon}(t)\big\Vert_{H}^{4}
\le \frac{C}{\varepsilon^2}\E \bigg( \int_{\tau }^{\tau+\varepsilon}\left\Vert
\widetilde{g}^{\Delta}(t)-\widetilde{g}^{\Delta} (\tau)\right\Vert
_{H}^{2}\cdot {\bf 1}_{[\tau,\tau+\eps]}(t) \, \d t \bigg)^2
\\
\leq
\frac{C}{\varepsilon}\mathbf{E}\int_{\tau
}^{\tau+\varepsilon}\left\Vert
\widetilde{g}^{\Delta}(t)-\widetilde{g}^{\Delta} (\tau)\right\Vert
_{H}^{4}\,\mathrm{d}t
\leq
\frac{C}{\varepsilon}\int_{\tau
}^{\tau+\varepsilon}\mathbf{E}\left\Vert
\widetilde{g}^{\Delta}(t)-\widetilde{g}^{\Delta} (\tau)\right\Vert
_{H}^{4}\,\mathrm{d}t.
\end{align*}
By the Lebesgue differentiation theorem, we have for each $X\in L^{4}
(\Omega,\mathcal{F}_{1},H)$,
\begin{equation}\label{eq:proof.501}
\lim_{\varepsilon\downarrow0}\frac{1}{\varepsilon}\int_{\tau}^{\tau
+\varepsilon}\mathbf{E}\left\Vert \widetilde{g}^{\Delta}(t)-X\right\Vert _{H}
^{4}\,\mathrm{d}t=\mathbf{E}\left\Vert
\widetilde{g}^{\Delta}(\tau)-X\right\Vert _{H} ^{4},\ \ \ \text{for a.e.
}\tau\in\lbrack0,1).
\end{equation}
Since $L^{4}(\Omega,\mathcal{F}_{1},H)$ is separable, let $X$ run through a
countable dense subset $Q$ of $L^{4}(\Omega,\mathcal{F}_{1},H)$, and denote
\[
E:=\bigcup\nolimits_{X\in Q} E_{X}:=\bigcup\nolimits_{X\in
Q}\big\{\tau:\text{relation \eqref{eq:proof.501} does not hold for }X\big\}.
\]
Then $\mathrm{meas}(E)=0$. For each $\tau\in\lbrack0,1)\backslash E$ and any positive $\eta$, we can take an $X_{\tau,\eta}\in Q$ such
that
\[
\mathbf{E}\left\Vert \widetilde{g}^{\Delta}(\tau)-X_{\tau,\eta}\right\Vert _{H}^{4}<\eta,
\]
then we have
\begin{align*}
&  \lim_{\varepsilon\downarrow0}\frac{1}{\varepsilon}\int_{\tau}
^{\tau+\varepsilon}\mathbf{E}\left\Vert \widetilde{g}^{\Delta}(t)-\widetilde{g}^{\Delta}
(\tau)\right\Vert _{H}^{4}\,\mathrm{d}t\\
&  \leq\lim_{\varepsilon\downarrow0}\frac{8}{\varepsilon}\int_{\tau}
^{\tau+\varepsilon}\mathbf{E}\left\Vert \widetilde{g}^{\Delta}(t)-X_{\tau,\eta}\right\Vert _{H}
^{4}dt+8\mathbf{E}\left\Vert \widetilde{g}^{\Delta}(\tau)-X_{\tau,\eta}\right\Vert _{H}^{4}\\
&  \leq16\mathbf{E}\left\Vert \widetilde{g}^{\Delta}(\tau)-X_{\tau,\eta}\right\Vert
_{H}^{4}<16\eta.
\end{align*}
From the arbitrariness of $\eta$, we conclude this lemma.
\end{proof}

Thanks to the the above lemma, we have
\begin{align}
&  \varepsilon^{-1}\mathbf{E}\int_{0}^{1}\left\langle
x_{1}^{\tau,\eps}(t),\tilde
{N}(t)x_{1}^{\tau,\eps}(t)\right\rangle \,\mathrm{d}t+\varepsilon^{-1}\mathbf{E}%
\left\langle x_{1}^{\tau,\eps}(1),\tilde{M}x_{1}^{\tau,\eps}(1)\right\rangle
\label{eq:simpl2ordervar}\\
&  =o(1)+\mathbf{E}\int_{\tau}^{1}\left\langle z^{\tau,\varepsilon}(t),\tilde
{N}(t)z^{\tau,\varepsilon}(t)\right\rangle \,\mathrm{d}t+\mathbf{E}\left\langle
z^{\tau,\varepsilon}(1),\tilde{M}z^{\tau,\varepsilon}(1)\right\rangle ,\ \
\forall \tau\in\lbrack0,1)\backslash E. \nonumber
\end{align}
Keeping in mind the above relation, and applying Proposition
\ref{prop:propofP}, we conclude for each $\tau\in[0,1)\backslash E$,
\begin{align}\label{eq:proof503}
\mathbf{E}\left\langle
\widetilde{g}^{\Delta}(\tau),{P}_{\tau}\widetilde{g}^{\Delta}(\tau)\right\rangle
=\lim_{\varepsilon\downarrow0}\varepsilon^{-1}\left\{  \mathbf{E}\int_{0}%
^{1}\left\langle x_{1}^{\eps}(t),\tilde{N}(t)x_{1}^{\eps}(t)\right\rangle \,\mathrm{d}%
t+\mathbf{E}\left\langle x_{1}^{\eps}(1),\tilde{M}x_{1}^{\eps}(1)\right\rangle
\right\}.
\end{align}
In view of Lemma \ref{cor:2expansion}, by denoting
\[
\mathcal{H}^{\Delta}(\tau):=\mathcal{H}(\tau,\bar{x}(\tau),u(\tau),p(\tau
),q(\tau))-\mathcal{H}(\tau,\bar{x}(\tau),\bar{u}(\tau),p(\tau),q(\tau )),
\]
and using the Lebesgue differentiation theorem and \eqref{eq:proof503}, we
obtain
\begin{equation}\label{eq:proof502}
0~\leq~ \mathbf{E}\left[ \mathcal{H}^{\Delta}(\tau)+\frac{1}{2}\left\langle
\widetilde{g}^{\Delta}(\tau),{P}_{\tau}\widetilde{g}^{\Delta}(\tau)\right\rangle\right],\
\ \ \text{a.e. }\tau\in\lbrack0,1).
\end{equation}
In view of the Fubini
theorem, we have
\begin{align*}
\int_0^1\mathbf{E}\left\langle
\widetilde{g}^{\Delta}(\tau),{P}_{\tau}\widetilde{g}^{\Delta}(\tau)\right\rangle
\d \tau 
=
\mathbf{E}\int_0^1\left\langle
\widetilde{g}^{\Delta}(\tau),{P}_{\tau}\widetilde{g}^{\Delta}(\tau)\right\rangle
\d \tau 
=
\mathbf{E}\int_0^1\left\langle
{g}^{\Delta}(\tau),{P}_{\tau}{g}^{\Delta}(\tau)\right\rangle \d \tau.
\end{align*}
Combining with \eqref{eq:proof502}, we obtain
\begin{align*}
0~\leq~
\mathbf{E}\int_0^1\Big[\mathcal{H}^{\Delta}(\tau)+\frac{1}{2}\left\langle
{g}^{\Delta}(\tau),{P}_{\tau}{g}^{\Delta}(\tau)\right\rangle\Big] \d \tau.
\end{align*}
Therefore, the desired maximum condition \eqref{eq:mpcondition} follows from
the arbitrary choice of control $u(\cdot)$. This completes the proof of Theorem \ref{thm:mp}.

\section{Examples}

In the following let us discuss two examples which can be covered by the abstract results of the present paper.

\begin{example}[SPDE with controlled coefficients]\rm
Given a bounded domain $\cD\subset \R^n$, we consider the following controlled stochastic PDE
\begin{equation}\label{eq:spde}
  \d \state(t,\xi) = [A\state(t,\xi)+f(t,\xi,u_t)\state(t,\xi)]\,\d t
  + [B\state(t,\xi)+g(t,\xi,u_t)\state(t,\xi)]\,\d W_t
\end{equation}
with $(t,\xi)\in [0,1]\times \cD$, initial data $\state(0,\xi)=\state_0(\xi)$ and a proper boundary condition. Here the control $u_{\cdot}$ is a stochastic process with values in a set $U\subset\R$, and $A,B$ are differential operators
defined as
\[A(t,\xi) = \sum_{i,j=1}^{n}a^{ij}(t,\xi)\frac{\partial^2}{\partial \xi_i \partial \xi_j}, \quad B(t,\xi) =
\sum_{i=1}^{n}\sigma^{i}(t,\xi)\frac{\partial}{\partial \xi_i}.\]
The equation is usually called super-parabolic SPDE if there is a constant $\kappa > 0$ such that 
$$ \kappa I_n \le (2 a^{ij} - \sigma^{i} \sigma^{j})_{n\times n} \le \kappa^{-1} I_n.$$
The fundamental theory about this kind of SPDEs can be found in
\cite{chow2007stochastic,rozovsky1990stochastic}, etc. Here we consider minimizing the cost functional
\begin{equation}\label{eq:J}
  J(u(\cdot))= \E \int_{\cD} \vert \state(t,\xi) \vert^2\, \d \xi
  + \E \int_0^1\!\!\int_{\cD} l(t,\xi,u_t)\vert \state(t,\xi) \vert^2 \, \d \xi \d t.
\end{equation}
Provided some assumptions on coefficients, such as $f,g$ and $l$ are all $\cP\times\mathcal{B}(\cD)\times\mathcal{B}(U)$-measurable functions and dominated by a given constant, one can verify the conditions of Theorem \ref{thm:mp} and apply the abstract result directly in this case.
\end{example}

\begin{example}\rm
Given a bounded domain $\cD\subset \R^n$, we consider the stochastic heat equation
\begin{equation*}\left\{
\begin{aligned}
& \d\state(t,\xi) = \Delta \state(t,\xi)\,\d t + b(\xi) u(t,\xi)\,\d W_t, \quad (t,\xi)\in[0,1]\times \cD;\\
& \state(0,\xi)= \state_0(\xi),\ \xi\in \cD;
\quad \state(t,\xi) = 0,\ (t,\xi)\in[0,1]\times \partial\cD.
\end{aligned}
\right.
\end{equation*}
where the control $u(\cdot,\cdot)$ is a random field with values in a set $E\subset \R$, and the coefficient $b$ is a given bounded function.
The objective of the control problem is to minimize the following cost functional
\begin{equation*}
  J(u(\cdot))= \frac{1}{2}\E \int_{\cD} \vert \state(1,\xi))\vert^2 \, \d \xi
  + \E \int_0^1\!\!\int_{\cD}c(\xi)\vert u(t,\xi) \vert^2 \, \d \xi \d t.
\end{equation*}
This problem can be covered by our result by taking
\[ H = L^2(\cD), \quad U = L^{2}(\cD;E),\quad A(t)\equiv \Delta,\quad B(t)\equiv 0. \]
Let $(\bar{\state},\bar{u})$ be an optimal solution. Then the first-order adjoint process $(p,q)$ is given by the following equation
\begin{equation*}\left\{
\begin{aligned}
& \d p(t,\xi) = - \Delta p(t,\xi)\,\d t + q(t,\xi)\,\d W_t, \quad (t,\xi)\in[0,1]\times \cD;\\
& p(1,x)= \bar{\state}(1,x),\ x\in \cD;
\quad p(t,\xi) = 0,\ (t,\xi)\in[0,1]\times \partial\cD.
\end{aligned}
\right.
\end{equation*}
Moreover, it is easy to verify that the second-order adjoint process $P_{\cdot}$ satisfies 
$$\langle f,P_{t}f \rangle = \int_{\cD}\big\vert\e^{(1-t)\Delta}f\big\vert^2(\xi)\,\d \xi,
\quad \forall\,f\in H$$ 
for each $t\in[0,1]$, where $(e^{t\Delta})_{t\ge 0}$ is the semigroup generated by the Laplacian operator on $H$.
In view of Theorem \ref{thm:mp} we can write down the following necessary condition for optimal control: for any $u(\cdot,\cdot) \in U$,
\begin{align*}
\int_{\cD} \Big\{ b(\xi)q(t,\xi)[u(t,\xi)- \bar{u}(t,\xi)] 
+ c(\xi)[\vert u(t,\xi)\vert^2 - \vert \bar{u}(t,\xi)\vert^2] & \\
+ \frac{1}{2} \big\vert
\e^{(1-t)\Delta}[bu(t,\cdot)-b\bar{u}(t,\cdot)]\big\vert^2(\xi)\Big\} \, \d \xi &~\geq~ 0
\end{align*}
holds for a.e. $(t,\omega)$. 


\end{example}

\begin{remark}\rm
(1) The above examples cannot be covered by the
results in either \cite{lvzhangmaximum} or \cite{fuhrman2012stochastic}
due to the quadratic form of the cost functionals.  

(2) Frankly speaking, the requirement of twice Frech\`et differentiability of coefficients restricts the applicability of our abstract result; for instance, the SPDE with general Nemytskii-type coefficients rarely fits into this framework, and yet the result obtained in \cite{fuhrman2012stochastic} is not covered by ours. Nevertheless, we believe that some key approaches in this paper, especially the techniques used in second-order duality analysis, could also apply to many other concrete problems. Further investigations thereof would be presented in future publications.
\end{remark}

\section*{Acknowledgments}
The authors would like to thank the referees and editor for their helpful
comments and suggestions.

\bibliographystyle{siam}
\bibliography{MPforSEE}

\end{document}